\numberwithin{equation}{section}
\theoremstyle{definition}
	\newtheorem{theorem}{Theorem}[section]
	\newtheorem*{remark}{Remark}
\theoremstyle{plain}
\newtheorem{satz}{Theorem}[section]
\newtheorem{defi}[satz]{Definition}
\newtheorem{cor}[satz]{Corollary}
\newtheorem{lem}[satz]{Lemma}
\newtheorem{prop}[satz]{Proposition}
\newtheorem{rem}[satz]{Remark}
\newcommand{\supp}{\operatorname{supp}}
\newcommand{\re}{\ensuremath{\mathbb{R}}}
\newcommand{\R}{\ensuremath{{\re}^d}}
\newcommand{\N}{\ensuremath{\mathbb{N}}}
\newcommand{\n}{\ensuremath{{\N}_0}}
\newcommand{\zz}{\ensuremath{\mathbb{Z}}}
\newcommand{\Z}{{\ensuremath{\zz}^d}}
\newcommand{\C}{\ensuremath{\mathbb{C}}}
\newcommand{\D}{\ensuremath{\mathbb{D}}}
\newcommand\dint{{\rm d}}
\newcommand{\cF}{\ensuremath{\mathcal F}}
\newcommand{\eps}{\varepsilon}
\newcommand{\bproof}{\begin{proof}}
\newcommand{\eproof}{\end{proof}}
\newlength{\fixboxwidth}
\newcommand{\be}{\begin{equation}}
\newcommand{\ee}{\end{equation}}
\newcommand{\beq}{\begin{eqnarray}}
\newcommand{\beqq}{\begin{eqnarray*}}
\newcommand{\eeq}{\end{eqnarray}}
\newcommand{\eeqq}{\end{eqnarray*}}
\begin{document}
\title{$L_p$-Sampling recovery for non-compact subclasses of $L_\infty$}

\author{Glenn Byrenheid $\!\!{}^{\dag}$, 
Serhii Stasyuk $\!\!{}^{\ddag,a}$, 
Tino Ullrich $\!\!{}^{\ddag}$\footnote{Corresponding author, Email:
tino.ullrich@math.tu-chemnitz.de}\\\\
$\!\!{}^{\dag}\!\!$ Friedrich-Schiller-University Jena, Institute of Mathematics\\\\
$\!\!{}^{\ddag}\!\!$ Chemnitz University of Technology, Faculty of Mathematics\\\\
$\!\!{}^{a}\!\!$ Institute of Mathematics of NAS of Ukraine}

\date{\today}

\maketitle

\begin{abstract} In this paper we study the sampling recovery problem for certain relevant multivariate function classes which are not compactly embedded into $L_\infty$. Recent tools relating the sampling numbers to the Kolmogorov numbers in the uniform norm are therefore not applicable. In a sense, we continue the research on the small smoothness problem by considering ``very'' small smoothness in the context of Besov and Triebel-Lizorkin spaces with dominating mixed regularity. There is not much known on the recovery of such functions except of an old result by Oswald in the univariate situation. As a first step we prove the uniform boundedness of the $\ell_p$-norm of the Faber-Schauder coefficients in a fixed level. Using this we are able to control the error made by a (Smolyak) truncated Faber-Schauder series in $L_q$ with $q<\infty$. It turns out that the main rate of convergence is sharp. As a consequence we obtain results also for $S^1_{1,\infty}F([0,1]^d)$, a space which is ``close'' to the space $S^1_1W([0,1]^d)$ which is important in numerical analysis, especially numerical integration, but has rather bad Fourier analytical properties.  
\end{abstract}



\section{Introduction} In this paper we continue studying the approximation power of Smolyak sparse grid sampling recovery for multivariate classes  with mixed smoothness $S^r_{p,q}B([0,1]^d)$ and $S^r_{p,q}F([0,1]^d)$ based on the Faber-Schauder representation \cite{By18,HiMaOeUl16} of $f$
$$
    f = \sum\limits_{j\in \N_{-1}^d}\sum\limits_{k \in \mathbb{D}_j}d^2_{j,k}(f)v_{j,k}\,.
$$
Here $v_{j,k}$ represents the tensorized dilated and translated hat function ($L_\infty$-nomalized) and 
$$
    d^2_{j,k}(f) = \prod\limits_{i=1}^d \left\{\begin{array}{rcl}
            \Delta^{2,i}_{2^{-j_i-1}}&:&j_i\geq 0\\
            {\rm Id}&:&j_i = -1
    \end{array}\right\}f(k_12^{-(j_1)_+},...,k_d2^{-(j_d)_+})
$$
represent the Faber-Schauder coefficients of $f$. The idea to approximate a function $f$ by a truncated Faber-Schauder series
\begin{equation}\label{f13_b}
    I_n f :=\sum\limits_{|j|_1\leq n} \sum_{k}d^2_{j,k}(f)v_{j,k}
\end{equation}
has been proposed by Triebel \cite{Tr10} and, independently, Dinh D\~ung \cite{Du11}. It provides a powerful tool for studying various situations of errors measured in $L_q$. The latter reference is even more general and uses so-called B-spline representations of functions, where the Faber system is a special case. A systematic discretization of multivariate functions with mixed smoothness in terms of Faber coefficients has been given \cite{HiMaOeUl16} and \cite{By18}, see also \cite{DuTeUl18} for further history. Very recently, the authors in \cite{KeSchVy22} used the 
Faber system to analyze the path regularity Brownian motion.  
Surprisingly, the proposed method turned out to be sharp in several regimes. In this paper we study an endpoint situation, where $r=1/p$ and $q\leq 1$. This still allows for an embedding into the continuous functions (and makes therefore function evaluations possible). However, the embedding is not compact.  

Recent observations regarding the problem of optimal sampling recovery of function classes in $L_2$ bring classes with mixed smoothness to the focus again. Since several newly developed techniques only work for Hilbert-Schmidt operators \cite{KrUl19}, \cite{NaSchUl21}, \cite{BSU22} or, more generally, in situations where certain asymptotic characteristics (approximation numbers) are square summable \cite{KrUl20}, \cite{DKU22} we need new techniques in situations where this is not the case. In \cite{TeUl22, TeUl21} the range of small smoothness has been considered where one is far away from square summability of corresponding widths. Still we have the compact embedding into $L_\infty$ for those examples. This embedding seems to be of crucial importance since the sampling numbers in $L_2$ can be related to the Kolmogorov numbers in $L_\infty$. It has been shown in \cite{Te21,BSU22}
\begin{equation}\label{f1}
    \varrho_{bm}({\mathbf F},L_2) \leq \frac{C}{(b-1)^{3/2}}d_m({\mathbf F},L_\infty)\,.    
\end{equation}
with some absolute constant $C>0$ and an oversampling constant $b>1$. In other words, a compact embedding into $L_\infty$ allows us to use the decaying Kolmogorov numbers for controlling the sampling numbers. As discussed in \cite{TeUl22} sparse grid techniques \cite{BuGr04} perform asymptotically worse by a $\log$-factor for classes with mixed smoothness compactly embedded into $L_\infty$.

In this paper we continue research in this direction. Note, that there are several relevant (multivariate) function classes ${\mathbf F}$ which are continuously embedded in $L_\infty$ but not compactly. Thus, \eqref{f1} is not applicable. Almost nothing is known how to reconstruct functions from samples, which only satisfy a Besov regularity with smoothness $r=1/p$ or Sobolev type regularity with $r=1$ and $p=1$. As a first step we prove the following relation for the Faber-Schauder coefficients
$$
    \sup\limits_{j \in \N_{-1}^d} \Big(\sum\limits_{k\in \Z^d} |d^2_{j,k}(f)|^p\Big)^{1/p} \lesssim \|f|S^{1/p}_{p,1}B(\R)\|\,.
$$
This relation directly implies that the truncated Faber-Schauder representation \eqref{f13_b} still works well when we consider errors in $L_q$ with $q<\infty$. We make progress towards the solution of an open problem mentioned in \cite{TDiss07}. 
The univariate class $B^{1/p}_{p,1}$ and its approximation by equidistant samples in $[0,1]$ has been considered decades ago by Oswald \cite{Os81}.
Our main results read as follows. In case $1\leq p<\infty$ there is a constant $C$ depending on $p$ and $d$ such that 
$$\|f-I_nf\|_p\leq C 2^{-n/p}n^{d-1}\|f|S^{1/p}_{p,1}B([0,1]^d)\|$$
holds for all $n\in \mathbb{N}$. In the case $1\leq p<q<\infty$ we are able to prove the following bound. 
$$\|f-I_nf\|_q\leq C 2^{-n/q}n^{(d-1)/q}\|f|S^{1/p}_{p,1}B([0,1]^d)\|\,.$$
Due to embeddings the latter bound also holds true for $S^{1}_{1,\infty}F([0,1]^d)$\,. All these bounds can be rephrased in terms of linear sampling numbers. In case $1\leq p<q <\infty$ we obtain, for instance, 
$$
        m^{-1/q} \lesssim \varrho_m^{\rm lin}(S^{1/p}_{p,1}B([0,1]^d),L_q([0,1]^d)) \lesssim 
        (m^{-1}\log^{2(d-1)} m)^{1/q}\,.
$$

{\bf Notation.} As usual $\N$ denotes the natural numbers, $\N_0=\N\cup\{0\}$,
$\N_{-1}=\N_0\cup\{-1\}$, $\zz$ denotes the integers, 
$\re$ the real numbers, and $\C$ the complex numbers. The letter $d$ is always
reserved for the underlying dimension in $\re^d, \zz^d$ etc. For $a\in \re$ we denote $a_+ := \max\{a,0\}$. 
For $0<p\leq \infty$ and $x\in \R$ we denote $|x|_p = (\sum_{i=1}^d
|x_i|^p)^{1/p}$ with the
usual modification in the case $p=\infty$. We further denote 
$x_+ := ((x_1)_+,\ldots,(x_d)_+)$ and $|x|_+ := |x_+|_1$. By $(x_1,\ldots,x_d)>0$
we mean that each coordinate is positive. If $X$ and $Y$ are two (quasi-)normed spaces, the (quasi-)norm
of an element $x$ in $X$ will be denoted by $\|x|X\|$. 
The symbol $X \hookrightarrow Y$ indicates that the
identity operator is continuous. For two sequences $a_n$ and $b_n$ we will write
$a_n \lesssim b_n$ if there exists a constant $c>0$ such that $a_n \leq c\,b_n$
for all $n$. We will write $a_n \asymp b_n$ if $a_n \lesssim b_n$ and $b_n
\lesssim a_n$.


\section{The tensor Faber bases}
\label{HaarFaber}
As a main tool we will use decompositions of functions in terms of a Faber series expansion. 

\begin{figure}[H]\label{fig:3}
\centering
\begin{tikzpicture}[scale=3]

\draw[->] (-0.1,0.0) -- (1.1,0.0);
\draw[->] (0.0,-0.1) -- (0.0,1.1); 

\draw (1.0,0.03) -- (1.0,-0.03) node [below] {$1$};
\draw (0.03,1.0) -- (-0.03,1.00) node [left] {$1$};
\draw[->] (0.8,0.9) -- (0.7,0.7);

\node at (0.8,1) {$v_{0,0}$};

\node at (1.1,0.5) {$j=0$};

\draw (0,0) -- (0.5,1);
\draw (0.5,1) -- (1,0);

\draw[->] (1.4,0.0) -- (2.6,0.0); 
\draw[->] (1.5,-0.1) -- (1.5,1.1); 

\draw (2.5,0.03) -- (2.5,-0.03) node [below] {$1$};
\draw (1.53,1.0) -- (1.47,1.00) node [left] {$1$};
\draw[->] (2.5,0.9) -- (2.4,0.7) ;
\draw[->] (2,0.9) -- (1.9,0.7) ;

\node at (2.5,1) {$v_{1,1}$};
\node at (2,1) {$v_{1,0}$};

\node at (2.6,0.5) {$j=1$};

\draw (1.5,0,0) -- (1.75,1.0);
\draw (1.75,1) -- (2,0.0);
\draw (2,0) -- (2.25,1.0);
\draw (2.25,1) -- (2.5,0.0);

\draw[->] (2.9,0.0) -- (4.1,0.0); 
\draw[->] (3.0,-0.1) -- (3.0,1.1);

\draw (4.0,0.03) -- (4.0,-0.03) node [below] {$1$};
\draw (3.03,1.0) -- (2.97,1.00) node [left] {$1$};

\fill[gray!60] plot[domain=3:3.5] (\x,-6+2*\x)%
                      -- plot[domain=4:3.5] (\x,0);%

\node at (4.3,0.5) {$j\in\{0,1\}$};

\draw (3,0) -- (3.5,1);
\draw (3.5,1) -- (4,0);

\draw (3.0,0,0) -- (3.25,1.0);
\draw (3.25,1) -- (3.5,0.0);
\draw (3.5,0) -- (3.75,1.0);
\draw (3.75,1) -- (4,0.0);

\end{tikzpicture}
  \caption{Univariate hierarchical Faber basis on $[0,1]$ for levels $j\in\{0,1\}$ and their union.} \label{fig_Faber1}
\end{figure}
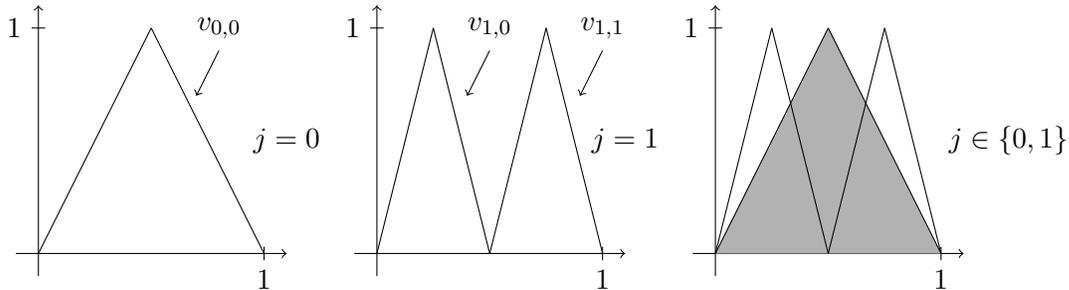

\subsection{The univariate Faber basis}

Let us briefly recall the basic facts about the Faber basis taken from
\cite[3.2.1, 3.2.2]{Tr10}. For $j\in \N_0$ and $k \in \D_j := \{0,1,...,2^j-1\}$ we denote by $I_{j,k}$ the dyadic interval
$$
    I_{j,k} = [2^{-j}k,2^{-j}(k+1)]\,.
$$ 
\begin{defi}\label{def:haar_faber}[The univariate Faber system] Let 
$$
      h(t) = \left\{\begin{array}{rcl}
                    1&:&t\in [0,1/2]\,,\\
                    -1&:&t\in [1/2,1]\,,\\
                    0&:& \mbox{otherwise}\,.
                \end{array}\right.
$$
the Haar function and $v(x)$ be the integrated Haar
function, i.e., 
\be\label{vinth}
  v(x):=2\int_0^x h(t)\,\dint t\quad,\quad x\in \re\,,
\ee
and for $j\in \n, k\in \D_j$  
\be\label{f19}
      v_{j,k}(\cdot) = v(2^j\cdot - k)\,.
\ee
For notational reasons we let $v_{-1,0}:=x$ and $v_{-1,1} := 1-x$ for $j=-1$
and obtain the Faber system
$$
    F:=\{v_{j,k}:j\in \N_{-1}, k\in \D_j\}\,,
$$
where $\D_{-1}:=\D_{1}= \{0,1\}$.
\end{defi}

Faber \cite{Fa09} observed that every continuous
(non-periodic) function $f$ on $[0,1]$ can be represented as
\begin{equation}\label{f51}
    f(x) = f(0)\cdot (1-x)+f(1)\cdot x -
\frac{1}{2}\sum\limits_{j=0}^{\infty}\sum\limits_{k=0}^{2^j-1}
\Delta^2_{2^{-j-1}}(f,2^{-j}k)v_{j,k}(x)
\end{equation}
with uniform convergence, see e.g. \cite[Theorem 2.1, Step 4]{Tr10}. The analysis of Triebel-Lizorkin and Besov spaces on $\re$ as defined in section \ref{perfs} requires a version of the Faber-Schauder representation acting on $\re$. For this purpose we extend the number of translations to the whole integers and obtain 
\be  f(x) = \sum_{k\in \zz} f(k) v_{-1,k}(x)-
\frac{1}{2}\sum\limits_{j=0}^{\infty}\sum\limits_{k\in \zz}
\Delta^2_{2^{-j-1}}(f,2^{-j}k)v_{j,k}(x),\label{repronr}\ee
where $v_{-1,k}(\cdot):=v_{0,0}((\cdot+1+k)/2)$.

\subsection{The tensor Faber basis}
Let now $f(x_1,\ldots,x_d)$ be a $d$-variate function $f\in C(\R)$. By fixing
all variables except $x_i$
we obtain by $g(\cdot) = f(x_1,\ldots, x_{i-1},\cdot,x_{i+1},\ldots,x_d)$ a univariate
continuous function. By applying \eqref{repronr} in every such component we
obtain the representation
\begin{equation}\label{repr_C(R^d)}
  f(x) = \sum\limits_{j\in \N_{-1}^d} \sum\limits_{k\in \Z} d^2_{j,k}(f)
  v_{j,k}(x)
\end{equation}
 in $C(K)$, $K\subset \R$ compact,
where 
$$
  v_{j,k}(x_1,\ldots,x_d):=v_{j_1,k_1}(x_1)\cdots
v_{j_d,k_d}(x_d)\quad,\quad j\in \N_{-1}^d, k\in \zz^d\,,
$$
and 
\begin{equation}\label{f100}
      d^2_{j,k}(f) =
(-2)^{-|e(j)|}\Delta^{2,e(j)}_{2^{-(j+1)}}(f,x_{j,k})\quad,\quad j\in \N_{-1}^d,
k\in \Z\,.
\end{equation}
Here we put $e(j) = \{i:j_i \neq -1\}$ and $x_{j,k} =
(2^{-(j_1)_+}k_1,\ldots,2^{-(j_d)_+}k_d)$\,.\\
In Section \ref{application} we apply the Faber-Schauder series expansion for functions on the  $d$-variate unit cube $[0,1]^d.$ For this purpose we simply truncate the series expansion to all translations whose support has a non-empty intersection with $[0,1]^d$. That is  
$$ 
 \D_j:=\D_{j_1} \times \cdots \times \D_{j_d}\quad,\quad j = (j_1,\dots,j_d)\in \N_{-1}^d\,.
$$ 
With similar arguments as above we obtain tor $f\in C([0,1]^d)$ the representation 
\begin{equation}\label{repr_C([0,1]^d)}
  f(x) = \sum\limits_{j\in \N_{-1}^d} \sum\limits_{k\in \D_j} d^2_{j,k}(f)
  v_{j,k}(x).
\end{equation}

\section{Faber-Schauder coefficients and bandlimited functions}

In the sequel we denote with $\mathbb{I}^d$ one of the tensor domains $[0,1]^d$ or $\R$. The space $L_p(\mathbb{I}^d)$, $0<p\leq \infty$, contains all measurable functions $f:\mathbb{I}^d\rightarrow \C$ satisfying
$$
    \|f\|_p =
    \Big(\,\int_{\mathbb{I}^d}|f(x)|^p\,dx\Big)^{1/p} < \infty
$$
with the usual modification in case $p=\infty$. The space $C(\mathbb{I}^d)$ is often
used as a replacement for $L_{\infty}(\mathbb{I}^d)$. It denotes the collection of all
continuous and bounded $d$-variate functions equipped with the uniform norm.
The computation of the Fourier transform (and its inverse) of an $L_1$-integrable $d$-variate function is performed by the integrals ($\xi \in \R$)
$$
   \mathcal{F}f(\xi)= \frac{1}{(2\pi)^{d/2}}\int_{\R}
  f(x)e^{-i \xi \cdot x}\,dx\quad,\quad \mathcal{F}^{-1}f(\xi)= \frac{1}{(2\pi)^{d/2}}\int_{\R}
  f(x)e^{i \xi \cdot x}\,dx\,.
$$
To begin with, we recall the concept of a dyadic decomposition of unity. The
space $C^{\infty}_0(\re^d)$ consists of all infinitely many times differentiable compactly supported functions. 
    \begin{defi}\label{cunity} Let $\Phi(\re)$ be the collection of all
      systems
       $\varphi = \{\varphi_n(x)\}_{n=0}^{\infty} \subset C^{\infty}_0(\re)$
       satisfying
       \begin{description}
       \item(i) ${\supp}\,\varphi_0 \subset \{x:|x| \leq 2\}$\, ,
       \item(ii) ${\supp}\,\varphi_n \subset \{x:2^{n-1} \leq |x|
       \leq 2^{n+1}\}\quad,\quad n= 1,2,\ldots ,$
       \item(iii) For all $\ell \in \N_0$ it holds
       $\sup\limits_{x,n}
       2^{n\ell}\, |D^{\ell}\varphi_n(x)| \leq c_{\ell} <\infty$\, ,
       \item(iv) $\sum\limits_{n=0}^{\infty} \varphi_n(x) = 1$ for all
       $x\in \re$.
       \end{description}
    \end{defi}
    \noindent Now we fix a system $\varphi=\{\varphi_n\}_{n\in \zz} \in
\Phi(\re)$, where we 
    put $\varphi_n \equiv 0$ if $n<0$. For $\ell = (\ell_1,\ldots,\ell_d) \in \zz^d$ let the
building blocks $f_{\ell}$ be given by
     \begin{equation}\label{f2}
	f_\ell(x) = \cF^{-1}[\varphi_{\ell_1}(\xi_1)\cdots\varphi_{\ell_d}(\xi_d)\cF f(\xi)](x)
\quad,\quad x\in \R\,.
    \end{equation}
Because of the Paley-Wiener theorem the functions $f_\ell$ are entire analytic functions and therefore continuous. The goal of this section is to derive bounds for fixed ``levels'' $j$ of the Faber-Schauder expansion \eqref{repr_C(R^d)} of such a bandlimited function $f_{\ell}$. To be more precise, we aim at bounds for 
$\|\sum_{k\in \zz^d}d^2_{j,k}(f_{\ell})v_{j,k}(\cdot)\|_p\,.$
Clearly, due to the compact support of $v_{j,k}$ we may replace $v_{j,k}$ by 
the characteristic function $\chi_{j,k}$ of the parallelepiped $[2^{-j_1}k_1,2^{-j_1}(k_1+1)]\times \cdots \times [2^{-j_d}k_d,2^{-j_d}(k_d+1)]$\,. Note, that for any continuous function $f \in C(\R)$
$$
    \Big\|\sum\limits_{k\in \zz^d}d^2_{j,k}(f)v_{j,k}(\cdot)\Big\|_p\asymp 
    \Big\|\sum\limits_{k\in \zz^d}d^2_{j,k}(f)\chi_{j,k}(\cdot)\Big\|_p\asymp
    \Big(2^{-|j|_1}\sum\limits_{k\in \zz^d}|d^2_{j,k}(f)|^p\Big)^{1/p}\,.
$$
To do so we need some tools from harmonic analysis. We state a mixed version of the Peetre maximal inequality, proveed in \cite[1.6.4]{SchTr87}.

\begin{lem}\label{Pmax}[Peetre maximal inequality]\label{def:peetremax}
	Let $0<p<\infty$ and $a>1/p$. Let further $f \in L_1(\R)$ such that $\supp \cF f \subset [-b_1,b_1]\times \cdots \times [-b_d,b_d]$. Then there is a constant $c>0$, only depending on $a$ and $p$ but not on $f$ and $b$, such that 
	$$
	\Big\|\sup_{ y\in \R}\frac{|f( x+ y)|}{(1+b_1|y_1|)^{a}\ldots (1+b_d|y_d|)^{a}}\Big\|_p \leq c\|f\|_p\,.
	$$
	
\end{lem}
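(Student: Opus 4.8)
This is the classical anisotropic (mixed) Peetre maximal inequality, so the plan is to run the standard Fourier-analytic argument, paying attention to the two features that actually matter here: the product weight and, above all, the range $p\le 1$. First I would reduce to $b=(1,\dots,1)$ by the coordinatewise dilation $g(x)=f(x_1/b_1,\dots,x_d/b_d)$, whose Fourier transform is supported in $[-1,1]^d$. Since $\|g\|_p=(\prod_i b_i)^{1/p}\|f\|_p$ and the weighted supremum transforms in exactly the same way, the constant produced for $b=(1,\dots,1)$ transfers to general $b$ without picking up a $b$-dependence. Next I would exploit band-limitedness through a reproducing formula: choose $\psi\in\cs(\R)$ with $\cF\psi\equiv 1$ on $[-1,1]^d$, so that $\cF(f\ast\psi)=\cF f\cdot \cF\psi=\cF f$ and hence $f=f\ast\psi$. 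Because $\psi$ is Schwartz it satisfies $|\psi(w)|\le c_M\prod_{i=1}^d(1+|w_i|)^{-M}$ for every $M$, and after the substitution $z=y-w$ this gives the pointwise bound
\[
 |f(x+y)|\le c_M\int_{\R}|f(x+z)|\prod_{i=1}^d(1+|y_i-z_i|)^{-M}\,\dint z.
\]

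The crucial device is then the Peetre power trick. I set $f^*(x):=\sup_{y\in\R}|f(x+y)|/\prod_{i=1}^d(1+|y_i|)^a$, and note that $f^*(x)\le \|f\|_\infty<\infty$ because a band-limited $L_1$-function is bounded and continuous (Paley--Wiener, as already used in the text). Now I fix an exponent $\lambda$ with $1/a<\lambda<p$, which is possible \emph{precisely} because $a>1/p$. In the displayed bound I split $|f(x+z)|=|f(x+z)|^{1-\lambda}|f(x+z)|^{\lambda}$, estimate the first factor by $(f^*(x))^{1-\lambda}\prod_{i=1}^d(1+|z_i|)^{a(1-\lambda)}$, and absorb all weights using the elementary inequality $1+|y_i|\le (1+|z_i|)(1+|y_i-z_i|)$ (applied in both directions). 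Taking the supremum over $y$ and dividing by the finite factor $(f^*(x))^{1-\lambda}$ leaves
\[
 f^*(x)^{\lambda}\le c\int_{\R}|f(x+z)|^{\lambda}\prod_{i=1}^d(1+|z_i|)^{-a\lambda}\,\dint z.
\]

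Finally, since $a\lambda>1$ the weight $\prod_{i=1}^d(1+|z_i|)^{-a\lambda}$ is integrable and splits dyadically into box averages with geometrically decaying coefficients, so the right-hand side is dominated by the strong (iterated one-dimensional Hardy--Littlewood) maximal function: $f^*(x)^{\lambda}\le c\,M(|f|^{\lambda})(x)$. I then take $L_{p/\lambda}$-norms. Because $p/\lambda>1$, the operator $M$ is bounded on $L_{p/\lambda}(\R)$ by iterating the one-dimensional maximal theorem over the $d$ coordinates, whence $\|f^*\|_p^{\lambda}=\big\||f^*|^{\lambda}\big\|_{p/\lambda}\le c\big\|M(|f|^{\lambda})\big\|_{p/\lambda}\le c'\big\||f|^{\lambda}\big\|_{p/\lambda}=c'\|f\|_p^{\lambda}$, which is the assertion.

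The genuine obstacle is the regime $p\le 1$, where neither $M$ nor the ordinary maximal operator is bounded on $L_p$, so a direct convolution estimate cannot be closed. The hypothesis $a>1/p$ is exactly what permits the choice $1/a<\lambda<p$, which simultaneously yields $a\lambda>1$ (integrability of the weight and domination by $M$) and $p/\lambda>1$ (boundedness of $M$); reconciling these two requirements is the heart of the matter. A secondary but essential point is the mixed structure: one must work with the product weight and the \emph{strong} maximal function rather than an isotropic ball average, which is why the sharp condition is $a>1/p$ in each coordinate instead of $a>d/p$.
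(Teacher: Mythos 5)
The paper offers no proof of this lemma at all; it is quoted from \cite[1.6.4]{SchTr87}. So your argument can only be measured against the standard proof of that cited result, and that is essentially what you reproduce: dilation to $b=(1,\dots,1)$, the reproducing convolution $f=f\ast\psi$ with $\cF\psi\equiv 1$ on the spectrum, Peetre's power trick, domination by the strong (iterated one-dimensional) maximal operator, and its $L_{p/\lambda}$-boundedness for $p/\lambda>1$. The dilation reduction, the a priori finiteness $f^*(x)\le\|f\|_\infty<\infty$ (legitimate here because $f\in L_1(\R)$ with compact spectrum), and the closing estimate $\|f^*\|_p^{\lambda}\le c\|M(|f|^{\lambda})\|_{p/\lambda}\le c'\|f\|_p^{\lambda}$ are all correct.

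There is, however, one genuine gap. The power trick splits $|f(x+z)|=|f(x+z)|^{1-\lambda}|f(x+z)|^{\lambda}$ and bounds the first factor by $(f^*(x))^{1-\lambda}\prod_i(1+|z_i|)^{a(1-\lambda)}$, using $|f(x+z)|\le f^*(x)\prod_i(1+|z_i|)^{a}$. That step is valid only when $1-\lambda\ge 0$: for $\lambda>1$ the exponent is negative, the inequality reverses, and $|f(x+z)|^{1-\lambda}$ is not even finite at zeros of $f$. So your argument tacitly requires $1/a<\lambda\le\min\{1,p\}$, i.e.\ $a>\max\{1,1/p\}$. For $p\le 1$ this coincides with the hypothesis $a>1/p$ and your proof is complete --- and this is indeed the hard regime, as you say. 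But the lemma is stated for all $a>1/p$ with $0<p<\infty$, and it is invoked in the paper with $p\ge 1$ and $a$ taken just above $1/p$; when $p>1$ and $1/p<a\le 1$, every admissible $\lambda\in(1/a,p)$ exceeds $1$ and the splitting step fails. The repair is routine: for $\lambda>1$ replace the power trick by H\"older's inequality inside the reproducing bound, writing $\prod_i(1+|y_i-z_i|)^{-M}=\prod_i(1+|y_i-z_i|)^{-M_1}\cdot\prod_i(1+|y_i-z_i|)^{-M_2}$ with $M_2\lambda'>1$, which gives
\begin{equation}
|f(x+y)|^{\lambda}\le c\int_{\R}|f(x+z)|^{\lambda}\prod_{i=1}^d(1+|y_i-z_i|)^{-M_1\lambda}\,\dint z
\end{equation}
directly; the weight manipulation via $1+|z_i|\le(1+|y_i|)(1+|y_i-z_i|)$ and the maximal-function domination (which only need $a\lambda>1$ and $M_1\ge a$) then go through verbatim. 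With that case added, your proof is a complete and correct substitute for the external reference.
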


The following univariate pointwise estimate connecting differences of bandlimited functions to Peetre maximal operator is taken from \cite[Lem.\ 3.3.1]{TDiff06}. \begin{lem}\label{lem:diffversuspeetre}
	Let $a,b>0$ and $f\in L_1(\re)$ with $ \supp \mathcal{F}f\subset[-b,b]$. 
	Then there exists a constant $C>0$ such that
	\be 
	|\Delta_h^mf(x)|\leq C\min\{1,|bh|^m\}\max\{1,|bh|^a\}\sup\limits_{y\in \re}\frac{|f(x+y)|}{(1+|2^{j+\ell}y|)^a}\,.
	\ee

\end{lem}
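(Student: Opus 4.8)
I would prove the estimate by separating the two regimes encoded in the factors $\min\{1,|bh|^m\}$ and $\max\{1,|bh|^a\}$, and I will argue with the bandwidth $b$ appearing in the Peetre-type denominator (it plays the role of $2^{j+\ell}$ in the intended application, where the building block has spectrum in $[-2^{j+\ell},2^{j+\ell}]$, so that $b=2^{j+\ell}$). Throughout, the Paley--Wiener theorem guarantees that $f$ is entire, so all differentiations and the integral representation used below are licit.

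\textbf{Large steps $|bh|\ge 1$.} Here $\min\{1,|bh|^m\}=1$ and $\max\{1,|bh|^a\}=|bh|^a$, so it suffices to bound each summand of $\Delta_h^m f(x)=\sum_{k=0}^m\binom{m}{k}(-1)^{m-k}f(x+kh)$ directly. Writing $|f(x+kh)|=(1+b|kh|)^a\cdot\frac{|f(x+kh)|}{(1+b|kh|)^a}$ and using $k\le m$ together with $|bh|\ge 1$ gives $(1+b|kh|)^a\le (1+m|bh|)^a\lesssim |bh|^a$. Summation over $k$ then yields the asserted bound with a constant depending only on $m$ and $a$.

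\textbf{Small steps $|bh|\le 1$.} This is the substantial case. I would first use the integral representation of the iterated difference,
\[
 \Delta_h^m f(x)=\int_0^h\!\cdots\!\int_0^h f^{(m)}(x+t_1+\dots+t_m)\,dt_1\cdots dt_m ,
\]
which produces the factor $|h|^m$ and reduces matters to the pointwise Bernstein-type bound $|f^{(m)}(x+s)|\lesssim b^m\sup_{y\in\re}\frac{|f(x+s+y)|}{(1+b|y|)^a}$ for $|s|\le m|h|$. To obtain this, fix $\psi\in C_0^\infty(\re)$ with $\psi\equiv 1$ on $[-1,1]$ and $\supp\psi\subset[-2,2]$, and set $K=\cF^{-1}\psi$, a Schwartz function. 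Since $\supp\cF f\subset[-b,b]$ we have $\cF f=\psi(\cdot/b)\,\cF f$, whence, up to the fixed Fourier-normalisation constant, $f=f*[\,bK(b\,\cdot)\,]$, and after moving the derivatives onto the kernel and rescaling,
\[
 f^{(m)}(x)=b^m\int_\re f(x-w/b)\,K^{(m)}(w)\,dw .
\]
Estimating $|f(x-w/b)|\le (1+|w|)^a\sup_y\frac{|f(x+y)|}{(1+b|y|)^a}$ and using $\int_\re(1+|w|)^a|K^{(m)}(w)|\,dw<\infty$ (Schwartz decay) gives the Bernstein bound with a constant depending only on $a$ and $m$.

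\textbf{Re-centering and conclusion.} It then remains to move the Peetre maximal function from the base point $x+s$ back to $x$. Substituting $y'=s+y$ and invoking the elementary inequality $1+b|y'|\le(1+b|y'-s|)(1+b|s|)$ yields $\sup_y\frac{|f(x+s+y)|}{(1+b|y|)^a}\le (1+b|s|)^a\sup_{y'}\frac{|f(x+y')|}{(1+b|y'|)^a}$; since $|s|\le m|h|$ and $|bh|\le 1$, the factor $(1+b|s|)^a\le(1+m)^a$ is bounded. Combining the three displays gives $|\Delta_h^m f(x)|\lesssim |h|^m b^m\sup_{y}\frac{|f(x+y)|}{(1+b|y|)^a}=|bh|^m\sup_{y}\frac{|f(x+y)|}{(1+b|y|)^a}$, which is the assertion in this regime. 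I expect the reproducing-kernel/Bernstein inequality together with the re-centering step to be the main technical points, since they must be carried out uniformly in the integration shift $s$; the large-step case is entirely elementary.
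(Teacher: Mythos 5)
Your proof is correct: the case split at $|bh|=1$, the trivial triangle-inequality bound with the weight $(1+b|kh|)^a$ for large steps, and the integral representation of $\Delta_h^m$ combined with the reproducing-kernel Bernstein inequality and the re-centering estimate $(1+b|y'|)\le(1+b|y'-s|)(1+b|s|)$ for small steps are exactly the standard argument. The paper itself gives no proof here---it cites \cite[Lem.\ 3.3.1]{TDiff06}---and your argument is essentially the one carried out in that reference (you are also right that the $2^{j+\ell}$ in the displayed denominator is a typo for $b$).
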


The bound may be slightly improved when replacing pointwise estimates by estimates involving $L_p$ norms. We have the following. 

\begin{lem}\label{sharp} Let $j,\ell\in \N_0$ and $f \in C(\re)$. Then 
$$
    \|\sup\limits_{|h|\lesssim 2^{-j}}|f(\cdot +h)|\|_p \lesssim 2^{\ell/p}\|\sup_{|h|\lesssim 2^{-(j+\ell)}}|f(x+h)|\|_p
    \lesssim 2^{\ell/p}\Big\|\sup\limits_{y\in \re}\frac{f(x+y)}{(1+|2^{j+\ell}y|)^a}\Big\|_p\,.
$$
\end{lem}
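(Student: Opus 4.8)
I would split the displayed chain into its two inequalities and treat the right-hand one as a triviality, the left-hand one as the real content.

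\textbf{The second inequality.} This is just Peetre domination. Whenever $|h| \lesssim 2^{-(j+\ell)}$ we have $|2^{j+\ell}h| \lesssim 1$, so $(1+|2^{j+\ell}h|)^a \asymp 1$, and hence $|f(x+h)| \lesssim |f(x+h)|/(1+|2^{j+\ell}h|)^a \le \sup_{y\in\re}|f(x+y)|/(1+|2^{j+\ell}y|)^a$. Taking the supremum over the admissible $h$ and then the $L_p$-norm in $x$ gives the bound with an absolute constant, with no use of the band-limitation.

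\textbf{The first inequality: setup.} Here the pointwise inequality runs the wrong way (the larger radius produces the larger supremum), so the factor $2^{\ell/p}$ must be earned in the $L_p$-norm through spreading. The plan is an elementary tiling argument, which is the necessary substitute for Hardy--Littlewood since the range $p\le 1$ is allowed. Write $g(x):=\sup_{|h|\lesssim 2^{-(j+\ell)}}|f(x+h)|$ for the inner, finer-scale maximal function, and tile $\re$ by the dyadic intervals $J_i:=[i2^{-(j+\ell)},(i+1)2^{-(j+\ell)}]$ of length $2^{-(j+\ell)}$. For $x\in J_{i_0}$ the window of radius $\lesssim 2^{-j}$ around $x$ meets only $2N+1$ tiles with $N\asymp 2^\ell$; on each such tile $J_i$ the supremum of $|f|$ is controlled at the midpoint $z_i$, since $|f(y)|\le g(z_i)$ for all $y\in J_i$. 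Thus $\sup_{|h|\lesssim 2^{-j}}|f(x+h)|\le \max_{|i-i_0|\le N} g(z_i)$.

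\textbf{The first inequality: summation.} Raising to the $p$-th power and bounding the maximum by the sum (valid for every $p>0$), I get $(\sup_{|h|\lesssim 2^{-j}}|f(x+h)|)^p\le \sum_{|i-i_0|\le N} g(z_i)^p$ for $x\in J_{i_0}$. Integrating over $J_{i_0}$ and summing over $i_0$, each index is counted $2N+1\asymp 2^\ell$ times, so $\|\sup_{|h|\lesssim 2^{-j}}|f(\cdot+h)|\|_p^p\lesssim 2^{-j}\sum_i g(z_i)^p$. The final step converts the point-value sum back to $\|g\|_p^p$: since $g$ is itself a local maximum, the triangle inequality gives $g(z_i)\le g(x)$ for $x\in J_i$ after harmlessly enlarging the radius by a bounded factor, whence $2^{-(j+\ell)}g(z_i)^p\le\int_{J_i} g(x)^p\dd x$ and $\sum_i g(z_i)^p\le 2^{j+\ell}\|g\|_p^p$. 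Combining yields $2^\ell\|g\|_p^p$ and, after the $p$-th root, the asserted factor $2^{\ell/p}$.

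\textbf{Main obstacle.} The genuine point is this last conversion. For an arbitrary function the outer maximal operator at scale $2^{-j}$ can inflate the $L_p$-norm by far more than $2^{\ell/p}$: a bump of width $2^{-(j+\ell)}$ already realizes $2^\ell$, and narrower bumps are worse. What pins the exponent down is precisely that $g$ is a local maximum at scale $2^{-(j+\ell)}$ and hence contains no feature narrower than $2^{-(j+\ell)}$, so point values and local integrals over the tiles $J_i$ are comparable. The care needed is purely bookkeeping, namely tracking the overlap count $2N+1$ and the admissible radii so that \emph{exactly} $2^{\ell/p}$, rather than a larger power, emerges.
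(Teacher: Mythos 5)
Your proof is correct and rests on exactly the same idea as the paper's: cover the window of radius $\asymp 2^{-j}$ by $\asymp 2^{\ell}$ translated windows of radius $\asymp 2^{-(j+\ell)}$, dominate the maximum by the $\ell_p$-sum of the pieces, and integrate, which produces the factor $2^{\ell/p}$ from the number of pieces; the second inequality is, as you say, immediate since $(1+|2^{j+\ell}h|)^a\asymp 1$ on the relevant range of $h$. The only difference is one of bookkeeping: the paper keeps the shifted suprema as functions of $x$ and uses translation invariance of the Lebesgue integral, so each of the $\asymp 2^{\ell}$ terms contributes exactly $\|g\|_p^p$, whereas your detour through the midpoint values $g(z_i)$ forces the extra (correct, but avoidable) step of comparing point values of $g$ with local integrals of a slightly dilated copy of $g$ and absorbing the enlarged radius into the $\lesssim$ of the statement.
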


\begin{proof} We start with a pointwise estimate 
\begin{equation}
  \begin{split}
    \sup_{|h|\leq 2^{-j}}|f(x+h)| &\leq \sup_{k}\sup_{|h|
    \leq 2^{-(j+\ell)}}|f(x+k2^{-(j+\ell)}+h)|\\ &\leq \Big(\sum_{k} \sup_{|h|\lesssim 2^{-(j+\ell)}}|f(x+k2^{-(j+\ell)}+h) |^p\Big)^{1/p}\,.
  \end{split}    
\end{equation}
Taking $L_p$-norms on both sides  gives
\begin{equation}
   \begin{split}
    \|\sup_{|h|\leq 2^{-j}}|f(x+h)|\|_p &\leq
    \Big(\sum\limits_k 
    \int_{\re}\sup_{|h|\leq 2^{-(j+\ell)}}|f_{j+\ell}(x+k2^{-(j+\ell)}+h) |^p\,dx\Big)^{1/p}\\
    &\lesssim 2^{\ell/p}\|\sup_{|h|\lesssim 2^{-(j+\ell)}}|f(x+h)|\|_p\,.
   \end{split}
\end{equation}
\end{proof}

In the next lemma we combine both univariate bounds and derive a multivariate estimate via iteration with respect to coordinate directions. Let $f\in L_1(\R)$ and $f_{j+\ell}$ denotes the bandlimited function from \eqref{f2}.  
\begin{lem}\label{lem:estfsblock} Let $j\in \N_{-1}^d$ and $\ell \in \zz^d$. Then we have
$$
    \Big\|\sum\limits_{k\in \zz^d}d^2_{j,k}(f_{j+\ell})\chi_{j,k}(\cdot)\Big\|_p
    \lesssim\|f_{j+\ell}\|_p \prod\limits_{i =1}^d \min\{2^{2\ell_i},1\}\max\{2^{\ell_i/p},1\}\,.
$$

\end{lem}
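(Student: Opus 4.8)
The plan is to reduce the claim to an $L_p$ estimate for the coarse-scale supremum of a mixed second difference of the bandlimited function $f_{j+\ell}$, and then to peel off one coordinate direction at a time, treating the regimes $\ell_i\ge 0$ and $\ell_i<0$ with the two univariate tools already at hand.

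First I would use the equivalence recorded just before the lemma to pass from the step function to a weighted sequence norm,
$$\Big\|\sum_{k\in\zz^d}d^2_{j,k}(f_{j+\ell})\chi_{j,k}\Big\|_p\asymp\Big(2^{-|j|_+}\sum_{k\in\zz^d}|d^2_{j,k}(f_{j+\ell})|^p\Big)^{1/p}.$$
Writing $D(x):=\Delta^{2,e(j)}_{2^{-(j+1)}}(f_{j+\ell},x)$, so that $d^2_{j,k}(f_{j+\ell})=(-2)^{-|e(j)|}D(x_{j,k})$ is a sample of $D$ on the grid $\{x_{j,k}\}$ of mesh $2^{-(j_i)_+}$, I would dominate the sampled $\ell_p$-sum by the integral of a coarse supremum: since each cell has measure $\asymp 2^{-|j|_+}$ and contains the sample point $x_{j,k}$,
$$2^{-|j|_+}\sum_{k}|D(x_{j,k})|^p\le\Big\|\sup_{|h_i|\lesssim 2^{-(j_i)_+}}|D(\cdot+h)|\Big\|_p^p.$$
This turns the problem into bounding the $L_p$-norm of the coarse-scale supremum of the mixed difference $D$.

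Next I would run an induction over the directions $i\in e(j)$ carrying a genuine second difference (a direction with $j_i=-1$ carries only the identity and is handled identically with the difference step dropped). Fixing the remaining variables, in direction $i$ the band of $f_{j+\ell}$ is $b_i\asymp 2^{j_i+\ell_i}$, so difference step and band satisfy $|b_ih_i|\asymp 2^{\ell_i}$. For $\ell_i<0$ I apply Lemma~\ref{lem:diffversuspeetre} with $m=2$: it contributes the factor $\min\{1,2^{2\ell_i}\}=2^{2\ell_i}$ and replaces the difference by the univariate Peetre maximal quantity at resolution $2^{-(j_i+\ell_i)}$; since this resolution is coarser than the mesh $2^{-(j_i)_+}$, a shift by $|h_i|\lesssim 2^{-(j_i)_+}$ alters the Peetre weight only by a bounded factor, so the outer supremum is absorbed at no $L_p$-cost and $\max\{1,2^{\ell_i/p}\}=1$. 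For $\ell_i\ge 0$ the difference is useless ($\min\{1,2^{2\ell_i}\}=1$), so I bound it crudely by the coarse supremum $\sup_{|z_i|\lesssim 2^{-(j_i)_+}}$ and invoke Lemma~\ref{sharp}, whose refinement from scale $2^{-(j_i)_+}$ to scale $2^{-(j_i+\ell_i)}$ produces exactly the loss $\max\{1,2^{\ell_i/p}\}=2^{\ell_i/p}$ and again lands on the univariate Peetre maximal quantity. In either case direction $i$ contributes the factor $\min\{1,2^{2\ell_i}\}\max\{1,2^{\ell_i/p}\}$ and leaves behind the Peetre maximal function in that variable, so the iteration may proceed.

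After all $d$ directions are processed, what remains inside the $L_p$-norm is the full multivariate Peetre maximal function $\sup_{y\in\R}|f_{j+\ell}(\cdot+y)|\big/\prod_{i}(1+|2^{j_i+\ell_i}y_i|)^{a}$, with $f_{j+\ell}$ having Fourier support in $\prod_i[-2^{j_i+\ell_i+1},2^{j_i+\ell_i+1}]$. Choosing $a>1/p$ and applying the mixed Peetre inequality (Lemma~\ref{Pmax}) bounds this by $\|f_{j+\ell}\|_p$, which closes the estimate with the claimed product of factors. The main obstacle is the bookkeeping of the interleaved iteration: the two univariate lemmas must be combined consistently across directions with different signs of $\ell_i$, and one must verify that after peeling off one variable the object in the remaining variables is still a mixed Peetre maximal function to which the next step applies — in particular that taking the outer coarse supremum and the inner difference in one direction genuinely commutes, up to constants, with the maximal operators already extracted in the other directions (via Fubini together with the one-dimensional estimates applied slicewise).
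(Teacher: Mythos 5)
Your proposal is correct and follows essentially the same route as the paper: pointwise domination of the level-$j$ block by coarse-scale suprema of the mixed second difference, a direction-wise case split on the sign of $\ell_i$ using Lemma~\ref{lem:diffversuspeetre} (gain $2^{2\ell_i}$ for $\ell_i\le 0$, with the coarse supremum absorbed into the Peetre weight) and Lemma~\ref{sharp} (loss $2^{\ell_i/p}$ for $\ell_i>0$), concluding with the mixed Peetre maximal inequality for $a>1/p$. The only cosmetic difference is that you phrase the first reduction via the sequence-norm equivalence and dominate the sampled sum by an integral, while the paper bounds the step function pointwise and then integrates; these are the same estimate.
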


\begin{proof} {\em Step 1.} To provide a technically transparent proof of this Lemma we start with the univariate case $(d=1)$. In the second part of this proof we deal with the multivariate case which requires more involved notation. We put for $x\in \re$
$$
   F_{j,l}(x):=\sum\limits_{k\in \mathbb{Z}} d^2_{j,k}(f_{j+l})\chi_{j,k}(x)\,.
$$
Let $x\in [2^{-j}k,2^{-j}(k+1)]$. For this $x$ we have
   
$$
|d^2_{j,k}(f_{j+l})\chi_{j,k}(x)|
\lesssim |f_{j+l}(k\cdot 2^{-j}+2\cdot 2^{-(j+1)})|
+2|f_{j+l}(k\cdot 2^{-j}+2^{-(j+1)})|
$$
$$
+|f_{j+l}(k\cdot 2^{-j})|
\lesssim \sup\limits_{|h|\lesssim 2^{-j}} |f_{j+l}(x+h)|\,.
$$
Since $\chi_{j,k}(x)$ do not have overlap we receive 
$$
|F_{j,l}(x)|
=
\Big|\sum\limits_{k\in \mathbb{Z}} d^2_{j,k}(f_{j+l})\chi_{j,k}(x)\Big|
\lesssim \sup\limits_{|h|\lesssim 2^{-j}} |f_{j+l}(x+h)|\,.
$$
By Lemma \ref{sharp} we find 
\begin{equation}\label{f11}
    \|F_{j,l}\|_p \lesssim \max\{2^{\ell/p},1\}\Big\|\sup\limits_{y\in \re}\frac{f_{j+l}(x+y)}{(1+|2^{j+\ell}y|)^a}\Big\|_p
\end{equation}
for some $a>0$ which is at our disposal. 

In case $l \leq 0$ we may continue arguing pointwise. First of all, we have
$$
    |F_{j,l}(x)| \leq \sup\limits_{|y|\lesssim 2^{-j}}|\Delta^2_{2^{-j-1}}f_{j+l}(x+y)|\,.
$$
Using Lemma \ref{lem:diffversuspeetre} and the fact that $l\leq 0$ we obtain 
\begin{equation}\label{f13}
     |F_{j,l}(x)| \leq 2^{2l}\sup\limits_{y\in \re}\frac{|f(x+y)|}{(1+|2^{j+\ell}y|)^a}\,.
\end{equation}
Combining \eqref{f11} and \eqref{f13} gives
$$
    \|F_{j,l}(x)\|_p \lesssim \Big\|\sup\limits_{y\in \re}\frac{|f(x+y)|}{(1+|2^{j+\ell}y|)^a}\Big\|_p\max\{1,2^{l/p}\}\min\{1,2^{2 l}\}\,.
$$
Choosing $a>1/p$ and applying the Peetre maximal inequality in Lemma \ref{Pmax} gives the result for $d=1$\,.

{\em Step 2.} We deal with the multivariate case and start with a pointwise estimate of
$$
    F_{j,\ell}(x):=\sum\limits_{k\in \Z} d^2_{j,k}(f_{j+\ell})\chi_{j,k}(x)\,,
$$
where we apply the above procedure in every direction. In order not to drown in notation we introduce the following direction-wise maximal operator
$$
    {\mathfrak M}^i_jf(x) := \sup\limits_{|h|\lesssim 2^{-j}}|f(x+e_ih)| \, ,
$$
where $e_i=(\underbrace{0,\dots,0}_{ i-1 }
,1,
\underbrace{0,\dots,0}_{ d-i })$.
Clearly, we have for $x\in \R$
\begin{equation}
  \begin{split}
   |F_{j,\ell}(x)| \lesssim \Big(\prod\limits_{i:\ell_i>0} {\mathfrak M}_{j_i}^i\Big) \circ \Big(\prod\limits_{i:\ell_i\leq 0} {\mathfrak M}_{j_i}^i\Big) \circ 
    \Big(\prod\limits_{i:\ell_i\leq 0}\Delta^{2,i}_{2^{-j_i-1}}\Big)f(x)\,.
  \end{split}
\end{equation}
Here we use that we have in every direction 
$$
 |\Delta^{2,i}_{2^{-j-1}} f_{j+\ell}| \leq {\mathfrak M}^i_{j_i} f_{j+\ell}(x)
$$
including the case $j=-1$ where the difference is replaced by the function value at the respective point. This case is included in $\Big(\prod_{i:\ell_i>0} {\mathfrak M}_{j}^i\Big)$ since for $\ell<0$ there is nothing to prove in this case. We use the triangle inequality in order to estimate the difference by point evaluations. Taking the $\sup$ leads to the direction-wise maximal operator. In case $\ell\leq 0$ we keep the direction-wise difference. Nevertheless, in order to get rid of the characteristic function we have to additionally apply a direction-wise $\sup$ also in this case. Since it holds that 
$$
    {\mathfrak M}_{j_i}^i f(x) \leq{\mathfrak M}_{j_i}^i g(x) 
$$
with $|f| \leq |g|$ we first estimate 
$$
    \Big|\Big(\prod\limits_{i:\ell_i\leq 0}\Delta^{2,i}_{2^{-j-1}}\Big)f_{j+\ell}(x)\Big|
$$
pointwise from above using Lemma \ref{lem:diffversuspeetre} iteratively. This gives 
$$
    \Big(\prod\limits_{i:\ell_i\leq 0}\Delta^{2,i}_{2^{-j-1}}\Big)f_{j+\ell}(x)
    \lesssim \sup\limits_{\substack{y_i \in \re \\ i:\ell_i \leq 0}}
    \frac{|f_{j+\ell}(x+\sum\limits_{i:\ell_i\leq 0}y_ie_i)|}{\prod\limits_{i:\ell_i\leq  0}(1+2^{\ell_i+j_i}|y_i|)^a}    \prod\limits_{i:\ell_i\leq 0} 2^{2\ell_i}\,.
$$
for any $a>0$\,. The maximal operators $(\prod_{i:\ell_i\leq 0} {\mathfrak M}_{j_i}^i)$ go on the same coordinates. Since $\ell_i\leq 0$ we clearly have also 
$$
        \Big(\prod\limits_{i:\ell_i\leq 0} {\mathfrak M}_{j_i}^i\Big)\Big(\prod\limits_{i:\ell_i\leq 0}\Delta^{2,i}_{2^{-j-1}}\Big)f_{j+\ell}(x)
    \lesssim \sup\limits_{\substack{y_i \in \re \\ i:\ell_i \leq 0}}
    \frac{|f(x+\sum\limits_{i:\ell_i\leq 0}y_ie_i)|}{\prod\limits_{i:\ell_i\leq 0}(1+2^{\ell_i+j_i}|y_i|)^a}
    \prod\limits_{i:\ell_i\leq 0} 2^{2\ell_i}
    \,.
$$
It remains to apply $(\prod_{i:\ell_i>0} {\mathfrak M}_{j_i}^i)$ and take the $L_p$-norm. Here we use Lemma \ref{sharp} iteratively.
\begin{equation}
    \begin{split}
        &\int_{\R} |F_{j,\ell}(x)|^p\,dx \\
        &\lesssim 
            \idotsint\limits_{\{(x_i)_{i:\ell_i\leq 0}\}}~\idotsint\limits_{\{(x_i)_{i:\ell_i> 0}\}}\Big|\Big(\prod\limits_{i:\ell_i>0} {\mathfrak M}_{j_i}^i\Big) \circ \Big(\prod\limits_{i:\ell_i\leq 0} {\mathfrak M}_{j_i}^i\Big) \circ 
    \Big(\prod\limits_{i:\ell_i\leq 0}\Delta^{2,i}_{2^{-j-1}}\Big)f_{j+\ell}(x)\Big|^p\prod\limits_{i:\ell_i>0}dx_i\prod\limits_{i:\ell_i\leq 0}dx_i\\
    &\lesssim \prod\limits_{i =1}^d \min\{2^{2p\ell_i},1\}\max\{2^{\ell_i},1\}\times\\
    &~~~~\times\idotsint\limits_{\{(x_i)_{i:\ell_i\leq 0}\}}~\idotsint\limits_{\{(x_i)_{i:\ell_i> 0}\}}\Big|\Big(\prod\limits_{i:\ell_i>0} {\mathfrak M}_{j_i+\ell_i}^i\Big)\sup\limits_{\substack{y_i \in \re \\ i:\ell_i \leq 0}}
    \frac{|f_{j+\ell}(x+\sum_{i:\ell_i\leq 0}y_ie_i)|}{\prod\limits_{i:\ell_i\leq 0}(1+2^{\ell_i+j_i}|y_i|)^a}\Big|^p\prod\limits_{i:\ell_i>0}dx_i\prod\limits_{i:\ell_i\leq 0}dx_i\,.
    \end{split}
\end{equation}
Finally, we use the trivial estimate (already used in Lemma \ref{sharp}) to replace the operators ${\mathfrak M}_{j_i+\ell_i}^i$ by the Peetre maximal function. This gives 
\begin{equation}
  \begin{split}
   \int_{\R} |F_{j,\ell}(x)|^p\,dx &\lesssim \prod\limits_{i =1}^d \min\{2^{2p\ell_i},1\}\max\{2^{\ell_i},1\}\int_{\R} \Big|\sup\limits_{y \in \R}\frac{|f_{j+\ell}(x+y)|}{\prod\limits_{i=1}^d (1+|2^{\ell_i+j_i}y_i|)^a}\Big|^p\,dx\\
   &\lesssim \prod\limits_{i =1}^d \min\{2^{2p\ell_i},1\}\max\{2^{\ell_i},1\}\|f_{j+\ell}\|_p^p\,, 
  \end{split}
\end{equation}
if we choose $a>1/p$\,.
\end{proof}

\section{Besov-Triebel-Lizorkin spaces with mixed smoothness}
\label{perfs}

For the definition of the corresponding function spaces on $\R$ we refer to \cite{SchTr87},  \cite{Vy06}, and \cite{By18}. The corresponding function spaces on $[0,1]^d$ are defined via restrictions of functions on $\R$, see \cite[Sect.\ 3.4]{By18}. In this section mainly focus on the definition of Besov and Triebel-Lizorkin spaces with dominating mixed
smoothness on $\R$ since they are crucial for our subsequent analysis. We
closely follow \cite[Chapter 2]{SchTr87} and use the building blocks $f_j(\cdot)$ defined in \eqref{f2}.

\begin{defi}\label{d1} (Mixed Besov and Triebel-Lizorkin spaces) \\
    {\em (i)}
    Let $0< p,q\leq
    \infty$ and $r>\sigma_p:=(1/p-1)_+$. Then $S^{r}_{p,q}B(\R)$ is defined
as the collection of all $f\in
    L_1(\R)$ such that
    \begin{equation}\label{f3}
	\|f|S^r_{p,q}B(\R)\|^{\varphi}:=\Big(\sum\limits_{j\in \N_0^d}
2^{|j|_1rq}\|f_j\|_p^q\Big)^{1/q}
    \end{equation}
    is finite (usual modification in case $q=\infty$).\\
    {\em (ii)} Let $0< p<\infty$ and $r>\sigma_p:=(1/p-1)_+$. Then $S^r_{p,q}F(\R)$ is defined as
the collection of all $f\in L_1(\R)$ such that 
    $$
	\|f|S^r_{p,q}F(\R)\|^{\varphi}:=\Big\|\Big(\sum\limits_{j\in
\N_0^d}2^{|j|_1rq}|f_j(x)|^q\Big)^{1/q}\Big\|_p
    $$
    is finite. 
    \end{defi}
Recall, that this definition is independent of the chosen system $\varphi$ in
the sense of equivalent (quasi-)norms. Moreover, in case $\min\{p,q\}\geq 1$ the defined spaces are Banach spaces, whereas they are quasi-Banach spaces in case $\min\{p,q\} < 1$. For details confer \cite[2.2.4]{SchTr87}. In the next lemma there appears the condition $r>(1/p-1)_+$ which is caused by the parameter range in our Definition \ref{d1}. All the subsequent embeddings of course also hold true for the general situation, where $r\in \re$.  We have the following elementary embeddings, see \cite[2.2.3]{SchTr87}. 

\begin{lem}\label{emb} Let $0<p<\infty$, $r>(1/p-1)_+$, and $0<q\leq \infty$. Let further $A \in \{F,B\}$.
\begin{description}
   \item (i) If $\varepsilon>0$ and $0<v\leq \infty$ then
 $$
   S^{r+\varepsilon}_{p,q}A(\R) \hookrightarrow S^r_{p,v}A(\R)\,.
 $$
 \item (iia) If $p<u< \infty$, $0<q,q_1,q_2 \leq \infty$ and $r-1/p = t-1/u$ then
 \begin{equation}
   \begin{split}
     S^r_{p,q}B(\R) &\hookrightarrow S^t_{u,q}B(\R),\\
     S^r_{p,q_1}F(\R) &\hookrightarrow S^t_{u,q_2}F(\R)\,.
   \end{split}    
 \end{equation}
\item (iib) (Jawerth-Franke embedding I)  If $0<p<u<\infty$, $0<w\leq \infty$ and $r-1/p = t-1/u$ then
 $$ 
     S^r_{p,u}B(\R) \hookrightarrow S^t_{u,w}F(\R)\,.
 $$
 \item (iic) (Jawerth-Franke embedding II) If $0<p<u<\infty$, $0<q\leq \infty$ and $r-1/p = t-1/u$ then
 $$ 
     S^r_{p,q}F(\R) \hookrightarrow S^t_{u,p}B(\R)\,.
 $$
 \item (iiia) If $r>1/p$ then
 $$
    S^r_{p,q}B(\R) \hookrightarrow C(\R)\,.
 $$
 \item (iiib) if $r=1/p$ and $q\leq 1$ then we still have 
 $$
    S^{1/p}_{p,q}B(\R) \hookrightarrow C(\R)\,.
$$    
 \item (iiic) It holds
 $$
    S^1_{1,\infty}F(\R) \hookrightarrow C(\R)\,.
 $$
 \item(iv) If $1<p<\infty$ and $r>0$ then
 $$
    S^r_{p,\min\{p,q\}}B(\R) \hookrightarrow S^r_{p,q}F(\R) \hookrightarrow
S^r_{p,\max\{p,q\}}B(\R)\,.
 $$
\end{description}
\end{lem}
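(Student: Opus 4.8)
The plan is to reduce every assertion to a statement about the frequency blocks $f_j$ from \eqref{f2}, since by Definition \ref{d1} the $B$- and $F$-(quasi-)norms are nothing but the $\ell_q(L_p)$- and $L_p(\ell_q)$-(quasi-)norms of the weighted sequence $(2^{|j|_1 r}f_j)_{j\in \N_0^d}$. The only genuinely analytic ingredient beyond this bookkeeping is the mixed Nikolskii inequality: if $\supp \cF g$ is contained in a box $\prod_{i=1}^d[-2^{j_i+1},2^{j_i+1}]$ and $0<p\leq u\leq \infty$, then $\|g\|_u \lesssim 2^{|j|_1(1/p-1/u)}\|g\|_p$. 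I would obtain this directly from the Peetre maximal inequality (Lemma \ref{Pmax}) applied coordinate-wise, exactly as in the proof of Lemma \ref{lem:estfsblock}.

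With this at hand the soft parts are immediate. For (i) I write $2^{|j|_1 r}\|f_j\|_p = 2^{-|j|_1\varepsilon}\,(2^{|j|_1(r+\varepsilon)}\|f_j\|_p)$; the geometric factor $2^{-|j|_1\varepsilon}$ makes the passage from $\ell_q$ to $\ell_v$ (any $0<v\le\infty$) a one-line H\"older estimate, and the $F$-case is identical inside the $L_p$-norm. For the embeddings into $C(\R)$ I use Nikolskii with $u=\infty$ to get $\|f\|_\infty \leq \sum_j \|f_j\|_\infty \lesssim \sum_j 2^{|j|_1/p}\|f_j\|_p$; when $r>1/p$ the surplus $2^{-|j|_1(r-1/p)}$ is summable for every $q$ (iiia), while for $r=1/p$ and $q\leq 1$ the elementary embedding $\ell_q\hookrightarrow\ell_1$ does the job (iiib). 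In both cases the $f_j$ are continuous (being bandlimited, hence entire) and the series converges uniformly, which yields membership in $C(\R)$.

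For the change-of-integrability embedding (iia) I apply the Nikolskii inequality blockwise: since $t-1/u=r-1/p$,
\begin{equation}
  2^{|j|_1 t}\|f_j\|_u \lesssim 2^{|j|_1 t}2^{|j|_1(1/p-1/u)}\|f_j\|_p = 2^{|j|_1 r}\|f_j\|_p,
\end{equation}
and summation in $\ell_q$ settles the $B$-case, while a vector-valued (maximal-function) variant handles the $F$-case with independent fine indices. The $B$--$F$ relations in (iv) are purely sequence-space facts: $S^r_{p,\min\{p,q\}}B\hookrightarrow S^r_{p,q}F\hookrightarrow S^r_{p,\max\{p,q\}}B$ follows from the generalized Minkowski inequality (for $p/q\ge1$), its reverse for $0<p/q\le1$, and the pointwise embedding $\ell_q\hookrightarrow\ell_p$, all applied to the nonnegative functions $h_j=|2^{|j|_1 r}f_j|^q$.

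The real obstacle is the pair of Jawerth--Franke embeddings (iib), (iic): these are limiting versions of (iia) in which the target fine index is the best possible, and here the crude Nikolskii bound above is lossy and destroys the sharp microscopic parameter. I would prove these by reducing to the one-dimensional Jawerth--Franke embeddings and iterating over the coordinate directions using the tensor structure of the blocks $f_j$, or alternatively by passing to atomic/wavelet sequence-space models and invoking real interpolation; this is the step that actually requires work. Finally (iiic) is deduced from (iic): taking $p=1$, $r=1$, $q=\infty$ gives $S^1_{1,\infty}F(\R)\hookrightarrow S^{1/u}_{u,1}B(\R)$ for any $u>1$ (since then $t=1/u$ and the fine index equals $p=1$), and then (iiib) with $q=1\leq1$ yields $S^{1/u}_{u,1}B(\R)\hookrightarrow C(\R)$. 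All of this is classical, and for the details we refer to \cite[2.2.3]{SchTr87}.
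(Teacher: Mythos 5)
Your proposal is correct and follows essentially the same route as the paper: the paper simply cites \cite[Chapt.~2]{SchTr87} for (i), (iia), (iiia), (iiib), (iv) and \cite[Lem.~3.4.2, 3.4.3]{DuTeUl18} for the Jawerth--Franke embeddings, and your deduction of (iiic) from (iic) with $p=r=1$, $q=\infty$ followed by (iiib) is exactly the paper's argument. The only differences are that you spell out the standard Nikolskii-inequality bookkeeping that the citations hide (a welcome addition) and that your closing reference to \cite[2.2.3]{SchTr87} should, for the mixed-smoothness Jawerth--Franke parts (iib), (iic), point instead to \cite{DuTeUl18}.
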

\begin{proof}
    The embeddings (i), (iia), (iiia), (iiib), (iv) are standard and can be found in \cite[Chapt.\ 2]{SchTr87}. As for the Jawerth-Franke type embeddings we refer to \cite[Lem.\ 3.4.2, 3.4.3]{DuTeUl18}. See also \cite[Rem.\ 3.4.4]{DuTeUl18} for further references, especially for the mixed smoothness case. Finally, the embedding in (iiic) is a consequence of (iic) and (iiib).
    \end{proof}

\subsection{Spaces on domains}
We aim for approximating functions defined on the unit cube $[0,1]^d$ with the above regularity assumptions. This requires the definition of function spaces on domains. The domain 
$\Omega\subset \R$ represents an open connected set. Later, when dealing with continuous bounded functions we may use as well compact sets like $[0,1]^d$.
\begin{defi}\label{restr}
	Let $\Omega$ be a domain in $\R$. 
    \begin{enumerate}
		\item Let additionally $0<p,q\leq \infty$ and $r>(1/p-1)_+$. Then we define 
		$$ 
		S^r_{p,q}B(\Omega):=\{f \in L_1(\Omega):\exists g\in S^r_{p,q}B(\R) \mbox{ with } g|_{\Omega}=f\}\,,
		$$
		where
		$$\|f|S^r_{p,q}B(\Omega)\| := \inf \{\|g|S^r_{p,q}B(\R)\|:g\in S^r_{p,q}B(\R),\; g|_{\Omega}=f\}.$$
		\item Let  $0<p< \infty$ and $0< q\leq \infty$ then  
		we define
		$$ 
		S^r_{p,q}F(\Omega):=\{f\in L_1(\Omega):\exists g\in S^r_{p,q}F(\R) \mbox{ with } g|_{\Omega}=f\}\,,
		$$
		where
		$$\|f|S^r_{p,q}F(\Omega)\| := \inf \{\|g|S^r_{p,q}F(\R)\|:g\in S^r_{p,q}F(\R),\;g|_{\Omega}=f\}.$$
	\end{enumerate}
\end{defi}
On bounded domains $\Omega$ all the embeddings in Lemma \ref{emb} keep valid. In addition we have the following embeddings. If $0<p_2<p_1\leq \infty$ (F-case: $p_i<\infty$), $0<q\leq \infty$ and $|\Omega|<\infty$, then
	$$ S^{r}_{p_1,q}F(\Omega)\hookrightarrow S^{r}_{p_2,q}F(\Omega)$$
	and
	$$ S^{r}_{p_1,q}B(\Omega)\hookrightarrow S^{r}_{p_2,q}B(\Omega).$$
Clearly, this is a trivial consequence of the embedding
	$$ L_{p_1}(\Omega)\hookrightarrow L_{p_2}(\Omega).$$
It is well-known that spaces $S^r_{p,q}B([0,1]^d)$ with sufficiently large smoothness, namely $r>1/p$, are compactly embedded into $L_\infty$. This is a direct consequence of results on entropy numbers of the classes $S^r_{\infty,q}B([0,1]^d)$ in $L_\infty$, see \cite[Cor.\ 23,(iii)]{MaUl21} or \cite[Thm.\ 6.2]{TeUl22} and the embeddings stated in Lemma \ref{emb} above. 

However, in case $r=1/p$ we do not have a compact embedding. For the convenience of the reader we give a direct proof. 

\begin{lem}\label{non-comp}[Non-compactness of limiting embeddings] {\em (i)} Let $0 < p < \infty$ and $q\leq 1$. Then the embedding 
\begin{equation}\label{f3S1/pB}
S^{1/p}_{p,q}B([0,1]^d) \hookrightarrow L_{\infty}([0,1]^d)
\end{equation}
is not compact.\\
{\em (ii)} If $p\leq 1$ and $0<q\leq \infty$ then the embedding
\begin{equation}\label{f4}
    S^{1/p}_{p,q}F([0,1]^d) \hookrightarrow L_\infty([0,1]^d) 
\end{equation}    
is not compact.
\end{lem}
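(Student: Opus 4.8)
The plan is to exhibit, for both parts simultaneously, a sequence $(g_n)_n$ that is bounded in the source space but whose images in $L_\infty([0,1]^d)$ stay pairwise separated; since every relatively compact subset of $L_\infty$ is totally bounded, such a sequence can have no Cauchy (hence no convergent) subsequence, so the embedding cannot be compact. First I would fix a single real-valued profile $\phi$ with $\cF\phi\in C^\infty_0(\re)$ non-negative, even, and supported in a thin annulus around $|\xi|\asymp 1$, normalised so that $\phi(0)=\|\phi\|_\infty=1$ (the non-negativity of $\cF\phi$ forces $|\phi(x)|\le\phi(0)$). For $n\in\N$ I set
$$ G_n(x):=\prod_{i=1}^d \phi\big(2^n(x_i-\tfrac12)\big),\qquad x\in\R, $$
and let $g_n:=G_n|_{[0,1]^d}$. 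Each $G_n$ is then a band-limited bump of ``width'' $2^{-n}$, peaked at the common centre $(\tfrac12,\dots,\tfrac12)$, with $\|G_n\|_\infty=\phi(0)^d=1$.

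Next I would bound the norms from above straight from Definition \ref{d1}, which is the reason for using a band-limited profile. Since $\cF G_n$ is supported where each $|\xi_i|$ lies in a fixed dyadic annulus around $2^n$, only a number of frequency blocks $(G_n)_j$ bounded independently of $n$ is non-zero, each sitting at a level $j$ with $|j|_1\asymp nd$ and satisfying $\|(G_n)_j\|_p\lesssim\|G_n\|_p$ (with at least one block of comparable size). Because $\|G_n\|_p=\big(2^{-n/p}\|\phi\|_p\big)^d$, the decisive factor cancels exactly at the critical smoothness $r=1/p$:
$$ \|G_n|S^{1/p}_{p,q}B(\R)\|\asymp \|G_n|S^{1/p}_{p,q}F(\R)\|\asymp 2^{nd/p}\|G_n\|_p\asymp \|\phi\|_p^d, $$
uniformly in $n$ (the $B$- and $F$-norms agree up to constants since only boundedly many levels are active). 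Passing to the cube only decreases the norm, so $(g_n)$ is bounded in $S^{1/p}_{p,q}B$ when $q\le1$ and in $S^{1/p}_{p,q}F$ when $p\le1$; in both regimes continuity of the embedding into $L_\infty$ is available from Lemma \ref{emb}(iiib),(iiic) together with the lifting in (iia).

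It then remains to prove $L_\infty$-separation, which I would reduce to a one-dimensional fact. Evaluating along the coordinate segment $x(s)=(\tfrac12+s,\tfrac12,\dots,\tfrac12)$ and using $\phi(0)=1$ collapses the tensor to a single factor,
$$ g_n(x(s))-g_m(x(s))=\phi(2^n s)-\phi(2^m s), $$
whose supremum over $s$ equals $\beta_{|n-m|}$, where $\beta_k:=\sup_{u\in\re}|\phi(2^k u)-\phi(u)|$. Choosing $u_0>0$ with $\phi(u_0)=\tfrac12$ (possible by the intermediate value theorem) and letting $k\to\infty$ gives $\liminf_k\beta_k\ge\tfrac12$, while $\beta_k>0$ for every $k\ge1$ (otherwise $\phi(2^k\cdot)\equiv\phi$, impossible for a non-trivial Schwartz function); hence $c_0:=\inf_{k\ge1}\beta_k>0$. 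The extremising $s$ are of order $2^{-\min(n,m)}$, so the relevant points lie inside $[0,1]^d$ once $n,m\ge n_0$, yielding $\|g_n-g_m\|_{L_\infty([0,1]^d)}\ge c_0/2$ for all $n\ne m$. This furnishes the required bounded, uniformly separated sequence.

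I expect the main obstacle to be the uniform upper bound at the critical smoothness $r=1/p$: the whole argument hinges on the exact cancellation $2^{nd/p}\|G_n\|_p\asymp 1$, so the profile must genuinely occupy a single frequency level in each coordinate, which is precisely what keeps the $\varphi$-analysis elementary and makes the $B$- and $F$-norms coincide up to constants. The only point needing a little care is that, for the finitely many small levels, the extremising $s$ in the separation step actually lands in the cube; this is harmless because $\phi$ is concentrated and one may simply start the sequence at a sufficiently large $n_0$. Working with band-limited bumps rather than hat functions is deliberate, as it avoids invoking any Faber-side sequence characterisation at the delicate endpoint $r=1/p$.
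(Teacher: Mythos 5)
Your argument is correct, but it takes a genuinely different route from the paper. The paper's proof works only with part (i) in $d=1$: it takes the $L_\infty$-normalized hat functions $v_{j,0}$ of the Faber system itself, verifies $\|v_{j,0}|B^{s}_{p,q}\|\asymp 1$ for $s\le 1/p$ via the difference characterization of Besov norms (using the two regimes $k\le j$ and $k>j$ for $\sup_{|h|\le 2^{-k}}\|\Delta^m_h v_{j,0}\|_p$), notes $\|v_{j,0}-v_{\ell,0}\|_\infty\ge 1$, tensorizes for $d>1$, and then deduces part (ii) from part (i) through the Jawerth--Franke embedding $S^{1/p_1}_{p_1,p_2}B\hookrightarrow S^{1/p_2}_{p_2,q}F$. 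You instead build concentrated band-limited tensor bumps $G_n$ and estimate the norm directly from the Littlewood--Paley Definition~\ref{d1}, exploiting that only $O(1)$ frequency blocks at $|j|_1\asymp nd$ are active so that $2^{nd/p}\|G_n\|_p\asymp 1$ cancels exactly at $r=1/p$; this treats the $B$- and $F$-scales simultaneously and avoids both the difference characterization and the Jawerth--Franke reduction, at the price of a slightly more delicate $L_\infty$-separation argument (your $\inf_k\beta_k>0$ step, plus the check that near-extremizers land in the cube) where the paper gets separation for free from disjointness of the hat functions' peaks. Two small points you should make explicit if you write this up: for $p<1$ the bound $\|(G_n)_j\|_p\lesssim\|G_n\|_p$ is not Young's inequality but the Fourier multiplier theorem for band-limited functions (e.g.\ \cite[1.6.3]{SchTr87}), which your construction does license; and only the upper bound $\lesssim\|\phi\|_p^d$ on the norms is needed, so the claim that some block has ``comparable size'' can be dropped.
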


\begin{proof} We show the non-compactness of the embedding \eqref{f3S1/pB} first in case  $d=1$. Clearly, by standard (tensorization) arguments this would imply the non-compactness also in higher dimensions. Note further, that the non-compactness of \eqref{f4} is implied by the Jawerth-Franke embedding (see Lemma \ref{emb}, (iib))  
$$
    S^{1/p_1}_{p_1,p_2}B([0,1]^d) \hookrightarrow S^{1/p_2}_{p_2,q}F([0,1]^d)
$$
together with the non-compactness of \eqref{f3S1/pB}. So, it remains to prove the non-compactness of $B^{1/p}_{p,q}([0,1]) \hookrightarrow L_\infty([0,1])$ for any $q \leq 1$ and $0<p <\infty$. 
 This fact is certainly known and can be found in the literature. However, we would like to give a direct argument here since it fits to the scope of the paper. The idea is to find a sequence $(g_j)_j$ of functions with $\|g_j|B^{1/p}_{p,q}\| \leq 1$ for all $j$  which is not convergent in $L_\infty([0,1])$\,. A straight-forward choice for the $g_j$ are the Faber-Schauder basis functions ($L_\infty$-normalized hat functions) for different levels $j$. To be more precise we let $j=0,1,2,...$ and consider the sequence $(g_j)_j := (v_{j,0})_{j\in \N}$. Clearly, we always have 
$$
    \sup\limits_{|h|\leq 2^{-k}}\|\Delta^m_h v_{j,0}\|_p \lesssim 2^{-j/p}\,.
$$
In case $k>j$ we even obtain (due to cancellation)
$$
    \sup\limits_{|h|\leq 2^{-k}}\|\Delta^m_h v_{j,0}\|_p \lesssim 2^{-k/p}2^{j-k}, 
$$
see for instance \cite[(3.19),(3.20)]{HiMaOeUl16}\,. Note, that here we have to use differences of sufficiently high order $m>s$ since $1/p$ may get large. This implies 
\begin{equation}
  \begin{split}
    \|v_{j,0}|B^{s}_{p,q}\| &\asymp \|v_{j,0}\|_1 + 
    \Big(\sum\limits_{k=0}^{\infty} [2^{ks}\sup\limits_{|h|\leq 2^{-k}}\|\Delta_h v_{j,0}\|_p]^q\Big)^{1/q}\\
    &\asymp
    2^{-j}+\Big(\sum\limits_{k\leq j} [2^{ks}2^{-j/p}]^q+
    \sum\limits_{k> j} [2^{ks}2^{-k/p}2^{j-k}]^q\Big)^{1/q}\\
    &\asymp 1\,
  \end{split}    
\end{equation}
in case $s\leq 1/p$\,. Hence, the elements $v_{j,0}$ have  uniformly (in $j$) bounded quasi-norm in $B^{s}_{p,q}$\,. However, it holds
$$
    \|g_j-g_\ell\|_{\infty}\geq 1
$$
if $j\neq \ell$, see for instance Figure \ref{fig_Faber1}\,. This directly disproves the compactness of the unit ball of $B^{1/p}_{p,q}$ in $L_\infty$.
\end{proof}

\begin{rem} We need to formulate the definition of the Kolmogorov widths to demonstrate some relations with approximative characteristics considering here. For a compact set $\mathbf{F}\hookrightarrow X$ of a Banach space $X$ define the Kolmogorov widths:
$$
d_m(\mathbf{F}, X) := \inf\limits_{\{u_i\}_{i=1}^m\subset X}
\sup\limits_{f \in \mathbf{F}}
\ \inf\limits_{c_i} 
\Big\|f-\sum\limits_{i=1}^m c_i\,u_i \Big\|_{X} , 
\ \ \ \ \ m=1,2,\dots ,
$$
and
$$
d_0(\mathbf{F}, X) := 
\sup\limits_{f \in \mathbf{F}}
\|f\|_{X} . 
$$
Clearly, considering $\mathbf{F}$ as the unit ball in  $S^{1/p}_{p,1}B([0,1]^d)$ and $X = L_{\infty}([0,1]^d)$ then 
$$
    d_m(S^{1/p}_{p,q}B([0,1]^d),L_\infty([0,1]^d) \nrightarrow 0
$$
by Lemma \ref{non-comp}.
\end{rem}

\section{The decay of the Faber-Schauder coefficients}
Now we are ready for proving our main tool: an assertion about the decay of the Faber-Schauder coefficients of functions from $S^{1/p}_{p,1}B(\R)$\,. In order to do so we need to define doubly indexed sequence spaces first.
\begin{defi}\label{defsequ} Let $0<p,q\leq \infty$ and $r\geq 1/p$.\\
{\em (i)} The doubly-index sequence space 
$s^r_{p,q}b$ is
the collection of all sequences $\{\lambda_{j,k}\}_{j\in \N_{-1}^d, k\in \Z}$
  such that
$$
  \|\lambda_{j,k}|s^r_{p,q}b\|:=\Big[\sum\limits_{j\in
\N_{-1}^d}2^{|j|_1(r-1/p)q}\Big(\sum\limits_{k\in
\Z}|\lambda_{j,k}|^p\Big)^{q/p} \Big]^{1/q}
$$
is finite.\\
{\em (ii)} Le $\Omega \subset \R$ be a compact domain. We define the index set $\D_j(\Omega)$ to be the set of all $k\in \Z$ such that $x_{j,k} \in \Omega$. The space $s^r_{p,q}b(\Omega)$ is defined as the space f all doubly-indexed sequences $\{\lambda_{j,k}\}_{j\in \N_{-1}^d,k\in \D_j(\Omega)}$ such that 
$$
\|\lambda_{j,k}|s^r_{p,q}b(\Omega)\|:=\Big[\sum\limits_{j\in
\N_{-1}^d}2^{|j|_1(r-1/p)q}\Big(\sum\limits_{k\in
\D_j(\Omega)}|\lambda_{j,k}|^p\Big)^{q/p} \Big]^{1/q}\,.
$$
\end{defi}

\begin{rem} {\em (i)} In case $\Omega = [0,1]^d$ we have $\D_j(\Omega) = \D_j$, which was defined right before Definition $\ref{def:haar_faber}$. In a certain sense the elements of $s^r_{p,q}b(\Omega)$ are restrictions of elements in $s^r_{p,q}b$ to indices related to $\Omega$.\\
{\em (ii)} These sequence spaces already appeared in \cite[Def.\ 2.1, 3.2]{Vy06}. Let $0<p,q\leq \infty$ and $r\in \re$. The
spaces $s^r_{p,q}b$ and $s^r_{p,q}b(\Omega)$ are Banach space if $\min\{p,q\}\geq 1$. In case
$\min\{p,q\} <1$ the space $s^r_{p,q}b$ is a quasi-Banach space. Moreover, if
$u:=\min\{p,q,1\}$ it is a $u$-Banach space, i.e.,
$$
    \|\lambda+\mu|s^r_{p,q}b\|^u \leq \|\lambda|s^r_{p,q}b\|^u +
\|\mu|s^r_{p,q}b\|^u\quad,\quad \lambda,\mu \in s^r_{p,q}b\,.
$$
\end{rem}

Here is the first main result.  

\begin{prop}\label{discvscont} Let $1 \leq  p < \infty$.
Then there
exists a constant $c>0$ (independent of $f$) such that
\begin{equation}\label{f7}
    \big\|d^2_{j,k}(f)|s^{1/p}_{p,\infty}b\big\| \leq c \,
\|f|S^{1/p}_{p,1}B(\R)\|
\end{equation}
for all $f\in S^{1/p}_{p,1}B(\R)$.
\end{prop}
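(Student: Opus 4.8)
The plan is to reduce \eqref{f7} to the single‑block estimate of Lemma~\ref{lem:estfsblock} via a Littlewood--Paley decomposition of $f$, and then to check that the geometric factors produced by that lemma collapse into the $S^{1/p}_{p,1}B$‑norm uniformly in the level $j$. First I would unwind the target quasi‑norm. Since $r=1/p$, the weight $2^{|j|_1(r-1/p)q}$ in Definition~\ref{defsequ} is trivial and $q=\infty$, so
\[
  \big\|d^2_{j,k}(f)|s^{1/p}_{p,\infty}b\big\|=\sup_{j\in\N_{-1}^d}\Big(\sum_{k\in\Z}|d^2_{j,k}(f)|^p\Big)^{1/p}.
\]
By the norm equivalence recorded just before Lemma~\ref{Pmax}, for each fixed $j$ one has $\big(\sum_{k\in\Z}|d^2_{j,k}(f)|^p\big)^{1/p}\asymp 2^{|j|_1/p}\big\|\sum_{k\in\Z}d^2_{j,k}(f)\chi_{j,k}\big\|_p$ with constants depending only on $p,d$. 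Hence it suffices to bound $2^{|j|_1/p}\|\sum_k d^2_{j,k}(f)\chi_{j,k}\|_p$ by $C\|f|S^{1/p}_{p,1}B(\R)\|$ \emph{uniformly in $j$}.

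Next I would decompose $f=\sum_{m\in\N_0^d}f_m$ into its frequency blocks \eqref{f2}. Because $S^{1/p}_{p,1}B(\R)\hookrightarrow C(\R)$ by Lemma~\ref{emb}(iiib) and, by a standard Nikolskii inequality for band‑limited functions, $\sum_m\|f_m\|_\infty\lesssim\sum_m 2^{|m|_1/p}\|f_m\|_p=\|f|S^{1/p}_{p,1}B(\R)\|<\infty$, the series converges uniformly; since each $d^2_{j,k}$ is a finite combination of point evaluations (continuous on $C$), it may be applied termwise, $d^2_{j,k}(f)=\sum_m d^2_{j,k}(f_m)$. Using the $L_p$‑triangle inequality (this is where $p\geq1$ enters) and Lemma~\ref{lem:estfsblock} with $\ell=m-j$,
\[
  2^{|j|_1/p}\Big\|\sum_k d^2_{j,k}(f)\chi_{j,k}\Big\|_p\leq\sum_{m\in\N_0^d}\|f_m\|_p\prod_{i=1}^d\Big[2^{j_i/p}\min\{2^{2(m_i-j_i)},1\}\max\{2^{(m_i-j_i)/p},1\}\Big].
\]

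The final step is the bookkeeping of the per‑direction factor $P_i:=2^{j_i/p}\min\{2^{2(m_i-j_i)},1\}\max\{2^{(m_i-j_i)/p},1\}$. If $m_i\geq j_i$ (in particular whenever $j_i=-1$) then $P_i=2^{m_i/p}$, while if $m_i<j_i$ then $P_i=2^{m_i/p}\,2^{(m_i-j_i)(2-1/p)}\leq2^{m_i/p}$, the inequality using $2-1/p>0$, i.e.\ $p\geq1$ again. Thus $\prod_i P_i\leq2^{|m|_1/p}$ in every case, so the right‑hand side above is at most $\sum_m 2^{|m|_1/p}\|f_m\|_p=\|f|S^{1/p}_{p,1}B(\R)\|$, which is independent of $j$. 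Taking the supremum over $j$ yields \eqref{f7}.

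The genuinely hard analytic input, the dependence on the frequency offset $\ell$, is exactly Lemma~\ref{lem:estfsblock}, which I assume. Given it, the points needing care are the termwise application of $d^2_{j,k}$ together with the $L_p$‑triangle inequality (both legitimate precisely because $p\geq1$ and the block series converges uniformly), and the verification that the geometric weights telescope to a $j$‑independent bound equal to the source norm. This last step is where the endpoint $r=1/p$, the source index $q=1$, and the target index $q=\infty$ match up exactly, and where the sign condition $2-1/p>0$ makes the $m_i<j_i$ contributions decay; this matching is the conceptual crux.
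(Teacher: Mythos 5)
Your argument is correct and is essentially the paper's own proof: the authors likewise reduce to $\sup_j 2^{|j|_1/p}\|\sum_k d^2_{j,k}(f)\chi_{j,k}\|_p$, decompose $f=\sum_\ell f_{j+\ell}$ (your $m=j+\ell$), apply the triangle inequality and Lemma~\ref{lem:estfsblock}, and absorb the factor $2^{-\ell_i/p}\min\{2^{2\ell_i},1\}\max\{2^{\ell_i/p},1\}\leq 1$ (valid since $2-1/p>0$) into the $S^{1/p}_{p,1}B$-norm. Your additional justification of the termwise application of $d^2_{j,k}$ via uniform convergence of the block series is a detail the paper leaves implicit, but it changes nothing of substance.
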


\begin{proof}  We make use of the decomposition \eqref{f2} in a slightly modified
way. For fixed $j\in
\N_{-1}^d$ we write $f = \sum_{ \ell \in \zz^d} f_{ j+ \ell}$. Putting this
into
\eqref{f7} and using the triangle inequality yields
\begin{equation}
  \begin{split}
  \Big\|d^2_{j, k}(f)|s^{1/p}_{p,\infty}b\Big\| &\asymp 
   \sup\limits_{ j\in
   \N_{-1}^d}2^{| j|_1/p}\Big\|\sum\limits_{ k\in \Z}
d^2_{ j, k}(f)\chi_{ j, k}( x)\Big\|_p \\
   &\leq
   \sup\limits_{j\in
   \N_{-1}^d}2^{|j|_1/p}\sum\limits_{ \ell \in \zz^d}\Big\|\sum\limits_{ k\in \Z}
d^2_{j,k}(f_{j+\ell})\chi_{j,k}(x)\Big\|_p\,.\\
   \end{split}
\end{equation}
Applying Lemma \ref{lem:estfsblock} we obtain
\begin{equation}
  \begin{split}
  \Big\|d^2_{j,k}(f)|s^{1/p}_{p,\infty}b\Big\| &\lesssim 
   \sup\limits_{j\in
   \N_{-1}^d}\sum\limits_{\ell \in \zz^d}2^{|j+\ell|_1/p}\|f_{j+\ell}\|_p \prod\limits_{i =1}^d 2^{-\ell_i/p}\min\{2^{2\ell_i},1\}\max\{2^{\ell_i/p},1\}\,\\
   &\lesssim \Big(\sum\limits_{\ell \in \zz^d}2^{|j+\ell|_1/p}\|f_{j+\ell}\|_p\Big)
   \sup\limits_{\ell \in \zz^d}\prod\limits_{i =1}^d 2^{-\ell_i/p}\min\{2^{2\ell_i},1\}\max\{2^{\ell_i/p},1\} \,\\
   &\lesssim \|f|S^{1/p}_{p,1}B(\R)\|\,,
\end{split}
\end{equation}
since the $\sup$ stays finite since $1\leq p \leq \infty$\,.
\eproof

As a direct consequence we have the following restricted version
\begin{cor}\label{cor:faber} Let $1\leq p<\infty$. Then there exists a constant $c>0$ such that 
\begin{equation}\label{f7_2}
    \big\|d^2_{j,k}(f)|s^{1/p}_{p,\infty}b(\Omega)\big\| \leq c \,
\|f|S^{1/p}_{p,1}B(\Omega)\|
\end{equation}
for all $f\in S^{1/p}_{p,1}B(\Omega)$.
\end{cor}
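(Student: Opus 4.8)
The plan is to deduce Corollary~\ref{cor:faber} directly from Proposition~\ref{discvscont} by a straightforward restriction and infimum argument, without reopening the harmonic-analytic machinery. Recall that the space $S^{1/p}_{p,1}B(\Omega)$ is defined (Definition~\ref{restr}) via restrictions: $\|f|S^{1/p}_{p,1}B(\Omega)\|$ is the infimum of $\|g|S^{1/p}_{p,1}B(\R)\|$ over all extensions $g\in S^{1/p}_{p,1}B(\R)$ with $g|_\Omega=f$, and the restricted sequence (quasi-)norm $\|\cdot|s^{1/p}_{p,\infty}b(\Omega)\|$ is obtained from $\|\cdot|s^{1/p}_{p,\infty}b\|$ by summing only over the indices $k\in \D_j(\Omega)$, i.e. those $k$ with $x_{j,k}\in\Omega$ (Definition~\ref{defsequ}(ii)).

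First I would fix $f\in S^{1/p}_{p,1}B(\Omega)$ and let $g\in S^{1/p}_{p,1}B(\R)$ be an arbitrary extension with $g|_\Omega=f$. The key observation is that the Faber-Schauder coefficient $d^2_{j,k}(f)$ depends only on the values of $f$ at the nodes $x_{j,k}$ (and its neighbours inside the support of the second difference), all of which lie in $\Omega$ when $x_{j,k}\in\Omega$; more robustly, since $g$ agrees with $f$ on $\Omega$ and the coefficient functional $d^2_{j,k}$ for $k\in \D_j(\Omega)$ is a fixed linear combination of point evaluations at nodes in $\Omega$, we have $d^2_{j,k}(g)=d^2_{j,k}(f)$ for every $k\in \D_j(\Omega)$. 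Consequently the restricted sum defining $\|d^2_{j,k}(f)|s^{1/p}_{p,\infty}b(\Omega)\|$ is a partial sum (over a subset of indices) of the full sum defining $\|d^2_{j,k}(g)|s^{1/p}_{p,\infty}b\|$, whence, monotonicity of the $\ell_p$- and $\sup$-aggregation gives
\begin{equation}
    \big\|d^2_{j,k}(f)|s^{1/p}_{p,\infty}b(\Omega)\big\|
    \leq \big\|d^2_{j,k}(g)|s^{1/p}_{p,\infty}b\big\|.
\end{equation}

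Next I would apply Proposition~\ref{discvscont} to the extension $g$ on all of $\R$, obtaining
\begin{equation}
    \big\|d^2_{j,k}(g)|s^{1/p}_{p,\infty}b\big\|
    \leq c\,\|g|S^{1/p}_{p,1}B(\R)\|,
\end{equation}
with the same constant $c$ from the proposition. Chaining the two displays yields $\|d^2_{j,k}(f)|s^{1/p}_{p,\infty}b(\Omega)\|\leq c\,\|g|S^{1/p}_{p,1}B(\R)\|$ for \emph{every} admissible extension $g$. Since the left-hand side does not depend on the choice of $g$, I would finally take the infimum over all such $g$ on the right-hand side, which by Definition~\ref{restr} equals $c\,\|f|S^{1/p}_{p,1}B(\Omega)\|$, giving \eqref{f7_2}.

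The only genuine point requiring care — and the step I expect to be the main (minor) obstacle — is justifying the coefficient-agreement $d^2_{j,k}(g)=d^2_{j,k}(f)$ for $k\in\D_j(\Omega)$, i.e. checking that the full support of the defining difference $\Delta^{2,e(j)}_{2^{-(j+1)}}$ at the node $x_{j,k}$ stays inside $\Omega$ when $x_{j,k}\in\Omega$. For a general open domain this may fail for boundary-adjacent nodes, so one should either invoke the convention (as in Definition~\ref{defsequ}(ii) and the surrounding remark, where $\D_j(\Omega)=\D_j$ for $\Omega=[0,1]^d$) that $\D_j(\Omega)$ collects precisely the indices whose associated coefficient is determined by $\Omega$, or observe that since $g|_\Omega=f$ and both are restrictions of the same $\R$-function in the relevant neighbourhood, the finitely many point evaluations entering $d^2_{j,k}$ coincide. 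Everything else is monotonicity of norms over index subsets plus the definition of the restriction norm as an infimum, so no further analytic input is needed beyond the already-established Proposition~\ref{discvscont}.
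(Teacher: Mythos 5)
Your proposal is correct and follows essentially the same route as the paper: coefficient agreement $d^2_{j,k}(g)=d^2_{j,k}(f)$ for $k\in\D_j(\Omega)$, monotonicity of the restricted sequence norm under the full one, an application of Proposition~\ref{discvscont} to the extension $g$, and an infimum over all extensions. Your extra remark about boundary-adjacent nodes is a reasonable point of care that the paper glosses over, but for $\Omega=[0,1]^d$ with the convention $\D_j(\Omega)=\D_j$ it causes no difficulty.
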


\begin{proof} Assume $f\in S^{1/p}_{p,1}B(\Omega)$. Then, by Definition \ref{restr} there is a $g \in S^{1/p}_{p,1}B(\R)$ with $g|_{\Omega} = f$. By Proposition \ref{discvscont} we obtain 
$$
        \big\|d^2_{j,k}(g)|s^{1/p}_{p,\infty}b\big\| \leq c \,
\|g|S^{1/p}_{p,1}B\|\,.
$$
Since $d^2_{j,k}(g) = d^2_{j,k}(f)$ for $j\in \N_{-1}^d$ and $k\in\D_j(\Omega)$ we have 
$$
    \big\|d^2_{j,k}(f)|s^{1/p}_{p,\infty}b(\Omega)\big\| \leq
    \big\|d^2_{j,k}(g)|s^{1/p}_{p,\infty}b\big\|
$$
we obtain 
$$
    \big\|d^2_{j,k}(f)|s^{1/p}_{p,\infty}b(\Omega)\big\| \leq c
    \|g|S^{1/p}_{p,1}B\|\,.
$$
The last inequality holds for every extension $g$ of $f$. Taking the infimum yields the result. 
\end{proof}

\section{Application for sampling recovery in $L_q$ with $q<\infty$}
\label{application}
In this section we would like to apply the Faber-Schauder embedding in Corollary \ref{cor:faber} for sampling recovery on the unit cube $\Omega = [0,1]^d$. As we have seen in Lemma \ref{non-comp} we can not expect an error decay of a sampling recovery operator in the worst-case when we measure the error in $L_\infty([0,1]^d)$\,. Hence we focus on the recovery in $L_q([0,1]^d)$ with $q<\infty$. Based on the Faber-Schauder representation we will use a sparse grid truncation in order to obtain a recovery operator. 
Let
\be\label{Tr_F-Sch_sum}
    I_n f 
    =\sum\limits_{|j|_1\leq n} 
    \sum\limits_{k\in {\mathbb D}_j}
    d^2_{j,k}(f)v_{j,k} \,.
\ee
with the notation from Section \ref{HaarFaber}. 
\begin{lem}\label{lem:comb}
The following estimates hold true for $\alpha>0$
\begin{enumerate}
\item
\begin{equation}
    \sum_{|j|_1>n} 2^{-\alpha|j|_1}\asymp n^{d-1}2^{-\alpha n}
\end{equation}
\item and \begin{equation}
    \sum_{|j|_1\leq n} 2^{|j|_1}\asymp n^{d-1}2^{n}.
\end{equation}
\end{enumerate}
\end{lem}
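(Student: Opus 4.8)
The plan is to reduce both sums, which only superficially depend on the full lattice $j$, to genuine one-dimensional sums over the single level parameter $m=|j|_1$. The key combinatorial input is the count of indices on a fixed level: grouping the summands according to $m=|j|_1$, the number of $j\in\N_0^d$ with $|j|_1=m$ equals $\binom{m+d-1}{d-1}$, which is a polynomial in $m$ of degree $d-1$ and hence satisfies $\#\{j:|j|_1=m\}\asymp m^{d-1}$ for $m\geq 1$ (the single value $m=0$ contributes a bounded constant). Passing to the larger index set $\N_{-1}^d$ with the convention $|j|_1=\sum_i|j_i|$ only shifts levels and enlarges the count within each level by a bounded factor, so the order $m^{d-1}$ is unchanged. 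Writing $a_m:=\#\{j:|j|_1=m\}$ turns the two claims into $\sum_{m>n}a_m2^{-\alpha m}\asymp n^{d-1}2^{-\alpha n}$ and $\sum_{m\leq n}a_m2^{m}\asymp n^{d-1}2^{n}$, which I would treat separately.

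For (ii) I would argue that the sum is driven entirely by its top term $m=n$. The lower bound follows from keeping this single term, $\sum_{m\leq n}a_m2^m\geq a_n2^n\gtrsim n^{d-1}2^n$. For the upper bound I use the crude monotonicity $a_m\lesssim m^{d-1}\leq n^{d-1}$ for $1\leq m\leq n$, so that $\sum_{m\leq n}a_m2^m\lesssim n^{d-1}\sum_{m=0}^n 2^m\leq 2\,n^{d-1}2^n$, which already gives the matching bound; the boundary value $m=0$ is harmless.

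For (i) the relevant term is now the first one, $m=n+1$, and the geometric decay $2^{-\alpha m}$ (using $\alpha>0$) guarantees that the tail beyond it is summable. The lower bound again comes from a single term: $\sum_{m>n}a_m2^{-\alpha m}\geq a_{n+1}2^{-\alpha(n+1)}\gtrsim n^{d-1}2^{-\alpha n}$. For the upper bound I would substitute $m=n+i$ with $i\geq 1$, factor out $2^{-\alpha n}$, and split the polynomial weight via $(n+i)^{d-1}\leq 2^{d-1}(n^{d-1}+i^{d-1})$. This yields
\[
\sum_{m>n}a_m2^{-\alpha m}\lesssim 2^{-\alpha n}\Big(n^{d-1}\sum_{i\geq 1}2^{-\alpha i}+\sum_{i\geq 1}i^{d-1}2^{-\alpha i}\Big)\lesssim n^{d-1}2^{-\alpha n},
\]
since both series converge for $\alpha>0$ and the second is bounded by a constant independent of $n$.

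The only genuine subtlety is this last step: unlike (ii), the sum in (i) is infinite, so one cannot simply bound every weight $a_m$ by the extremal one. The point I would emphasise is that the convergence of $\sum_{i\geq1}i^{d-1}2^{-\alpha i}$, which requires $\alpha>0$, is exactly what allows the polynomial factor to be absorbed into a constant, while the leading behaviour $n^{d-1}2^{-\alpha n}$ is dictated solely by the first admissible level $m=n+1$.
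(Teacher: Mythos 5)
Your proof is correct. Note that the paper does not actually prove this lemma: its ``proof'' consists of the single line ``We refer to \cite[p.10, Lemma D]{Tem86}'', delegating the whole statement to Temlyakov's classical counting lemma. What you have written is essentially the standard argument behind that reference: group the lattice points by the level $m=|j|_1$, use $\#\{j\in\N_0^d:|j|_1=m\}=\binom{m+d-1}{d-1}\asymp m^{d-1}$, and then observe that the geometric weight makes each sum comparable to its extremal term, with the tail in (i) absorbed via the convergence of $\sum_{i\ge1}i^{d-1}2^{-\alpha i}$ for $\alpha>0$. All steps check out, including the single-term lower bounds and the splitting $(n+i)^{d-1}\le 2^{d-1}(n^{d-1}+i^{d-1})$; your remark that passing from $\N_0^d$ to $\N_{-1}^d$ changes the level counts only by a bounded factor correctly handles the index set actually used in the paper's applications. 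So your proposal supplies a complete, self-contained proof of a fact the paper only cites.
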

\bproof
We refer to \cite[p.10, Lemma D]{Tem86}
\eproof

We first consider the case where $p=q$\,.

\begin{theorem}\label{thm:papprox}
Let $1\leq p< \infty$. Then there is a constant $C>0$ (independent of $n$ and $f$) such that
$$\|f-I_nf\|_p\leq C 2^{-n/p}n^{d-1}\|f|S^{1/p}_{p,1}B([0,1]^d)\|$$
holds for all $n\in \mathbb{N}$.
\end{theorem}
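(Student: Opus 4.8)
The plan is to estimate the tail of the Faber--Schauder series directly, feeding in the uniform coefficient bound from Corollary \ref{cor:faber}. Since $q=1\leq 1$, Lemma \ref{emb} (iiib) gives $S^{1/p}_{p,1}B([0,1]^d)\hookrightarrow C([0,1]^d)$, so the representation \eqref{repr_C([0,1]^d)} holds and, subtracting the truncation \eqref{Tr_F-Sch_sum}, I obtain
$$
    f-I_nf=\sum_{|j|_1>n}\sum_{k\in \mathbb{D}_j} d^2_{j,k}(f)\,v_{j,k}\,.
$$
Because $p\geq 1$ the triangle inequality in $L_p$ applies level-wise,
$$
    \|f-I_nf\|_p\leq \sum_{|j|_1>n}\Big\|\sum_{k\in \mathbb{D}_j} d^2_{j,k}(f)\,v_{j,k}\Big\|_p\,,
$$
the convergence of the right-hand side being guaranteed a posteriori by the geometric decay established below.

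Next I would use, for each fixed level $j$, the norm equivalence recorded at the beginning of Section 3,
$$
    \Big\|\sum_{k\in \mathbb{D}_j} d^2_{j,k}(f)\,v_{j,k}\Big\|_p \asymp 2^{-|j|_1/p}\Big(\sum_{k\in \mathbb{D}_j}|d^2_{j,k}(f)|^p\Big)^{1/p}\,,
$$
which stays valid on $[0,1]^d$ since the $v_{j,k}$ have pairwise essentially disjoint supports (one replaces $v_{j,k}$ by the characteristic functions $\chi_{j,k}$). The crucial input is Corollary \ref{cor:faber}: for $r=1/p$ the weight $2^{|j|_1(r-1/p)}$ defining $s^{1/p}_{p,\infty}b([0,1]^d)$ is identically $1$, so that
$$
    \sup_{j\in \N_{-1}^d}\Big(\sum_{k\in \mathbb{D}_j}|d^2_{j,k}(f)|^p\Big)^{1/p}=\big\|d^2_{j,k}(f)\,|\,s^{1/p}_{p,\infty}b([0,1]^d)\big\|\leq c\,\|f|S^{1/p}_{p,1}B([0,1]^d)\|\,.
$$
Inserting this uniform bound into the previous two displays yields
$$
    \|f-I_nf\|_p\lesssim \|f|S^{1/p}_{p,1}B([0,1]^d)\|\sum_{|j|_1>n}2^{-|j|_1/p}\,.
$$

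Finally I would evaluate the purely combinatorial sum by Lemma \ref{lem:comb} (i) with $\alpha=1/p$, giving $\sum_{|j|_1>n}2^{-|j|_1/p}\asymp n^{d-1}2^{-n/p}$, which completes the proof. The lemma is phrased for indices in $\N_0^d$; summing instead over $j\in \N_{-1}^d$ only adds finitely many shifted copies, one for each subset of coordinates set to $-1$, and each such copy contributes a term of order at most $n^{d-1}2^{-n/p}$, so the asymptotics are unchanged. I do not anticipate a genuine obstacle: the entire substance lies in Lemma \ref{lem:estfsblock} and its Corollary \ref{cor:faber}, and the present theorem is a clean bookkeeping of the tail via the level-wise $\ell_p$-bound together with the sparse-grid counting lemma. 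The only points requiring a little care are the legitimacy of the level-wise triangle inequality and the uniform convergence of \eqref{repr_C([0,1]^d)}, both of which follow from the continuous embedding into $C([0,1]^d)$.
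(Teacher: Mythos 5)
Your argument is correct and follows the paper's own proof essentially verbatim: decompose the tail, apply the level-wise triangle inequality, identify $\sup_j 2^{|j|_1/p}\|\sum_k d^2_{j,k}(f)v_{j,k}\|_p$ with the $s^{1/p}_{p,\infty}b([0,1]^d)$ norm via the disjoint-support equivalence, invoke Corollary \ref{cor:faber}, and evaluate the combinatorial sum by Lemma \ref{lem:comb}. Your added remarks on the $\N_{-1}^d$ versus $\N_0^d$ indexing and on the validity of the representation via the embedding into $C([0,1]^d)$ are correct minor refinements that the paper leaves implicit.
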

\bproof
The representation in \eqref{repr_C([0,1]^d)} allows us to express and estimate the error by

\begin{equation}
\begin{split}
\|f - I_n f\|_p
&= \Big\|\sum\limits_{|j|_1> n} 
    \sum\limits_{k\in \D_j}
    d^2_{j,k}(f)v_{j,k}\Big\|_p\\
&\leq \sum\limits_{|j|_1> n} 
    \Big\|\sum\limits_{k\in \D_j}
    d^2_{j,k}(f)v_{j,k}\Big\|_p \,2^{|j|_1/p} \, 2^{-|j|_1/p}\\ 
&\leq \sup\limits_{j} 
    2^{|j|_1/p}\Big\|\sum\limits_{k\in \D_j}
    d^2_{j,k}(f)v_{j,k}\Big\|_p \sum\limits_{|j|_1> n}  2^{-|j|_1/p}\,. 
\end{split}
\end{equation}
Finally applying Proposition \ref{discvscont} or Corollary \ref{cor:faber}, respectively, yields
\begin{equation}
\begin{split}
\|f - I_n f\|_p
&\lesssim \big\|d^2_{j,k}(f)|s^{1/p}_{p,\infty}b([0,1]^d)\big\| \, \sum\limits_{|j|_1> n}  2^{-|j|_1/p}\\
&\lesssim 2^{-n/p}n^{d-1} \|f|S^{1/p}_{p,1}B([0,1]^d)\|,\\  
\end{split}
\end{equation}
where the sum is estimated by Lemma \ref{lem:comb}.
\eproof

\begin{remark} An one-dimensional version of the above result has been proved by Oswald \cite{Os81} almost fourty years ago. 
\end{remark}

In the situation $p<q$ we loose in the main rate. This phenomenon has been observed earlier in the literature, see \cite{DuTeUl18}. We will use Jawerth-Franke type embeddings to improve the order of the logarithmic term.   
\begin{theorem}\label{thm:qapprox}Let $1\leq p<q< \infty$. Then 
$$
 \|f-I_nf\|_q\leq C 2^{-n/q}n^{(d-1)/q}\|f|S^{1/p}_{p,1}B([0,1]^d)\| .
$$
\end{theorem}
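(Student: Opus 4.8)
The plan is to imitate the proof of Theorem~\ref{thm:papprox}, but to replace the crude triangle inequality over the dyadic levels $j$ by a Hölder splitting that is tuned to a Triebel--Lizorkin ($F$-type) control of the Faber coefficients; this is precisely the mechanism that upgrades the logarithmic factor from $n^{d-1}$ to $n^{(d-1)/q}$, as announced in the text before the statement. Throughout I write $q':=q/(q-1)$ for the conjugate exponent (note $q>p\geq 1$, so $q'\in(1,\infty)$) and, for $j\in\N_{-1}^d$, I abbreviate
\begin{equation}
g_j(x):=\sum_{k\in\D_j}|d^2_{j,k}(f)|\,\chi_{j,k}(x)\,,
\end{equation}
so that, by \eqref{repr_C([0,1]^d)} together with the elementary pointwise bound $0\leq v_{j,k}\leq\chi_{j,k}$, one has $|f(x)-I_nf(x)|\leq\sum_{|j|_1>n}g_j(x)$.

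The first input is the transport of the regularity of $f$ from the Besov scale to the Triebel--Lizorkin scale. Since $1\leq q$, Lemma~\ref{emb}~(i) gives $S^{1/p}_{p,1}B([0,1]^d)\hookrightarrow S^{1/p}_{p,q}B([0,1]^d)$, and the Jawerth--Franke embedding Lemma~\ref{emb}~(iib), applied with its parameter $u$ equal to $q$ and its parameter $w$ equal to $q'$ (here $r-1/p=1/q-1/q=0$), yields $S^{1/p}_{p,q}B([0,1]^d)\hookrightarrow S^{1/q}_{q,q'}F([0,1]^d)$. The second input is the Faber--Schauder characterization of this target space in terms of its coefficients (the $F$-analogue of the sequence space in Definition~\ref{defsequ}; see \cite{Vy06,By18,DuTeUl18}), whose normalization is fixed by requiring agreement with the $b$-norm in the diagonal case. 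In the one-sided form needed here it reads
\begin{equation}
\Big\|\Big(\sum_{j\in\N_{-1}^d}2^{|j|_1 q'/q}\,g_j(\cdot)^{q'}\Big)^{1/q'}\Big\|_q
\;\lesssim\;\|f|S^{1/q}_{q,q'}F([0,1]^d)\|
\;\lesssim\;\|f|S^{1/p}_{p,1}B([0,1]^d)\|\,.
\end{equation}

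The decisive step is now purely combinatorial. Writing $g_j=2^{-|j|_1/q}\cdot\big(2^{|j|_1/q}g_j\big)$ and applying Hölder's inequality over the index set $\{|j|_1>n\}$ with the conjugate exponents $q$ and $q'$ gives, for every fixed $x$,
\begin{equation}
\sum_{|j|_1>n}g_j(x)\;\leq\;\Big(\sum_{|j|_1>n}2^{-|j|_1}\Big)^{1/q}\Big(\sum_{|j|_1>n}2^{|j|_1 q'/q}\,g_j(x)^{q'}\Big)^{1/q'}\,.
\end{equation}
By Lemma~\ref{lem:comb}~(i) with $\alpha=1$ the first factor is $\asymp n^{(d-1)/q}2^{-n/q}$ and is independent of $x$; the choice $w=q'$ above is exactly what makes the two Hölder exponents conjugate, so that the combinatorial factor collapses from $n^{d-1}$ to $n^{(d-1)/q}$. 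Taking the $L_q$-norm, pulling out the $x$-independent first factor, enlarging the remaining sum to all $j\in\N_{-1}^d$, and invoking the displayed coefficient bound yields
\begin{equation}
\|f-I_nf\|_q\;\leq\;n^{(d-1)/q}2^{-n/q}\Big\|\Big(\sum_{j\in\N_{-1}^d}2^{|j|_1 q'/q}\,g_j^{q'}\Big)^{1/q'}\Big\|_q\;\lesssim\;n^{(d-1)/q}2^{-n/q}\,\|f|S^{1/p}_{p,1}B([0,1]^d)\|\,,
\end{equation}
which is the asserted estimate.

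I expect the main obstacle to be the second input, namely the Faber--Schauder characterization of $S^{1/q}_{q,q'}F([0,1]^d)$ at the \emph{critical} smoothness $r=1/q$, where the standard two-sided equivalence is delicate. Only the one-sided estimate bounding the coefficient norm by the function norm is actually required, and this is what must be secured --- either by citing the sequence-space results of \cite{Vy06,By18} (valid since $f$ is continuous on $[0,1]^d$, so all Faber coefficients are well defined) or, if a self-contained argument is preferred, by establishing it directly through a vector-valued (Fefferman--Stein type) version of Lemma~\ref{lem:estfsblock} run in parallel with Proposition~\ref{discvscont}. A minor technical point to address separately is the treatment of the boundary levels $j_i=-1$, where the overlapping linear generators $v_{-1,k}$ have to be absorbed into the characteristic functions; this affects only constants and the finitely many boundary shifts, not the asymptotics.
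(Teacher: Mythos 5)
Your combinatorial step (the pointwise H\"older splitting over $\{|j|_1>n\}$ with exponents $q,q'$, combined with Lemma \ref{lem:comb}) is fine and correctly identifies why the logarithm improves to $n^{(d-1)/q}$. The genuine gap is your ``second input'': the coefficient estimate
\begin{equation}
\Big\|\Big(\sum_{j\in\N_{-1}^d}2^{|j|_1 q'/q}\,g_j(\cdot)^{q'}\Big)^{1/q'}\Big\|_q\;\lesssim\;\|f|S^{1/q}_{q,q'}F([0,1]^d)\|
\end{equation}
is not available from the sources you cite. The Faber--Schauder characterizations of $S^r_{p,\theta}F$ in \cite{Vy06,By18} require $r>\max\{1/p,1/\theta\}$ for the analysis (coefficient) direction; here you need it at $r=1/q$ with outer index $q$ and inner index $q'$, i.e.\ exactly at the boundary $r=1/p$ and, whenever $q>2$, strictly below the threshold $1/q'=1-1/q$. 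This is an endpoint statement of the same delicate nature as Proposition \ref{discvscont} --- which is precisely the paper's main technical contribution for the $B$-scale --- and nothing in the paper supplies its $F$-scale, $\ell_{q'}$-in-$j$ analogue. Note that Proposition \ref{discvscont} (after the embedding $S^{1/p}_{p,1}B\hookrightarrow S^{1/q}_{q,1}B$) only yields $\sup_j\|2^{|j|_1/q}g_j\|_q\lesssim\|f|S^{1/p}_{p,1}B\|$, an $\ell_\infty$-in-$j$ bound over infinitely many levels, from which the $L_q(\ell_{q'})$ bound you need does not follow. So as written the proof rests on an unproved (and possibly false at this critical index) assertion.

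The paper avoids this by applying Jawerth--Franke at the other end of the argument: instead of pushing $f$ into a critical $F$-space, it uses $S^{1/p-1/q}_{p,q}B([0,1]^d)\hookrightarrow S^{0}_{q,2}F=L_q$ (Lemma \ref{emb}, (iib)) applied to the \emph{tail} $f-I_nf$, whose Faber coefficients vanish for $|j|_1\le n$. The space $S^{1/p-1/q}_{p,q}B$ has strictly positive smoothness $1/p-1/q>0$, so only the standard \emph{synthesis} direction of the Faber characterization (\cite[Thm.\ 4.18]{By18}) is needed there; this converts $\|f-I_nf\|_q$ into $\big(\sum_{|j|_1>n}2^{q|j|_1(1/p-1/q)}\|\sum_k d^2_{j,k}(f)\chi_{j,k}\|_p^q\big)^{1/q}$, from which one pulls out $\sup_j 2^{|j|_1/p}\|\sum_k d^2_{j,k}(f)\chi_{j,k}\|_p$ (controlled by Proposition \ref{discvscont}) and is left with $\big(\sum_{|j|_1>n}2^{-|j|_1}\big)^{1/q}$, giving the same factor $2^{-n/q}n^{(d-1)/q}$. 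If you want to salvage your route, you would have to actually prove the endpoint $F$-space coefficient bound (e.g.\ via a vector-valued version of Lemma \ref{lem:estfsblock}); the paper's rearrangement shows this extra work is unnecessary.
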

\bproof Using Jawerth-Franke I (see Lemma \ref{emb}) we obtain 
\begin{equation}
\begin{split}
\|f - I_n f\|_q & \lesssim \|f-I_nf|S^{1/p-1/q}_{p,q}B(\Omega)\|\\
&\lesssim \Big(\sum\limits_{|j|_1> n} 
    2^{q|j|_1[1/p-1/q]}\Big\|\sum\limits_{k\in \D_j}
    d^2_{j,k}(f)\chi_{j,k}\Big\|^q_p\Big)^{1/q}\\
    &\lesssim \left[\sup_{j\in \N^d_{-1}}2^{|j|_1(1/p)}\Big\|\sum\limits_{k\in \D_j}
    d^2_{j,k}(f)\chi_{j,k}\Big\|_p\right] \Big(\sum_{|j|_1>n} 2^{-|j|_1}\Big)^{\frac{1}{q}}\,,
\end{split}
\end{equation}
where we used the inverse Faber-Schauder characterization for spaces with positive smoothness, cf.  \cite[Theorem 4.18]{By18}. This reference deals with the $\R$-case but can be easily extended by standard arguments as shown for instance in the proof of Corollary \ref{cor:faber} or following the arguments in \cite[Theorem 4.25]{By18} to the unit cube setting. Finally applying Proposition \ref{discvscont} yields
\begin{equation}
\begin{split}
\|f - I_n f\|_q
    &\lesssim  2^{-n/q}n^{(d-1)/q}\|f|S^{1/p}_{p,1}B([0,1]^d)\|\,.
    \end{split}
\end{equation}
\eproof

Let us now deal with the spaces $S^1_{1,\infty}F([0,1]^d)$, which are embedded into $C([0,1]^d)$ as Lemma \ref{emb}, (iiic) shows. By Jawerth-Franke embedding II (Lemma \ref{emb}, (iic)) we even know that for every $1<p < \infty$ we have $S^1_{1,\infty}F([0,1]^d) \hookrightarrow S^{1/p}_{p,1}B([0,1]^d)$. As a direct corollary of Theorems \ref{thm:papprox}, \ref{thm:qapprox} we obtain the following

\begin{theorem}\label{thm:p=1} {\em (i)} It holds for any small $\eps>0$
$$
        \|f-I_nf\|_1 \lesssim_{\eps} 2^{-n(1-\eps)}\|f|S^1_{1,\infty}F([0,1]^d)\|\,.
$$
{\em (ii)} For any $1<q<\infty$ we have 
$$
        \|f-I_nf\|_q \lesssim  2^{-n/q}n^{(d-1)/q}\|f|S^1_{1,\infty}F([0,1]^d)\|\,.
$$
\end{theorem}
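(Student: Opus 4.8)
The plan is to reduce both parts to the already-established Theorem \ref{thm:qapprox} by feeding in the Jawerth-Franke embedding II of Lemma \ref{emb}, (iic). The crucial preliminary observation, noted in the paragraph preceding the statement, is that for every $1<p<\infty$ one has the continuous embedding $S^1_{1,\infty}F([0,1]^d)\hookrightarrow S^{1/p}_{p,1}B([0,1]^d)$. Indeed, instantiating (iic) with source data $(r,p_{\mathrm{small}},q)=(1,1,\infty)$ and target data $(t,u)=(1/p,p)$, the matching condition holds since $r-1/p_{\mathrm{small}}=1-1=0$ and $t-1/u=1/p-1/p=0$, and the resulting $B$-space carries third index $p_{\mathrm{small}}=1$, i.e.\ it is exactly $S^{1/p}_{p,1}B$. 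Thus there is a constant $c_p>0$ with $\|f|S^{1/p}_{p,1}B([0,1]^d)\|\le c_p\,\|f|S^1_{1,\infty}F([0,1]^d)\|$ for all $f$.

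For part \emph{(ii)}, since $q>1$ I would fix any $p$ with $1<p<q$. Then $1\le p<q<\infty$, so Theorem \ref{thm:qapprox} applies and, combined with the embedding above, gives
\begin{equation}
\|f-I_nf\|_q \lesssim 2^{-n/q}n^{(d-1)/q}\|f|S^{1/p}_{p,1}B([0,1]^d)\| \lesssim 2^{-n/q}n^{(d-1)/q}\|f|S^1_{1,\infty}F([0,1]^d)\|,
\end{equation}
which is precisely the asserted bound.

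For part \emph{(i)}, I would use that $[0,1]^d$ has finite measure, so $L_q([0,1]^d)\hookrightarrow L_1([0,1]^d)$ yields $\|g\|_1\le \|g\|_q$ for every $q\ge 1$. Given $\eps>0$, choose $q=q(\eps)$ slightly above $1$, say with $1/q>1-\eps/2$; then part \emph{(ii)} gives $\|f-I_nf\|_1\le \|f-I_nf\|_q \lesssim 2^{-n/q}n^{(d-1)/q}\|f|S^1_{1,\infty}F([0,1]^d)\|$. Since $2^{-n/q}\le 2^{-n(1-\eps/2)}$ and the polynomial factor $n^{(d-1)/q}\le n^{d-1}$ is dominated by $2^{n\eps/2}$ up to an $\eps$-dependent constant, the two contributions combine to $2^{-n(1-\eps)}$, producing the claimed estimate with an implied constant depending on $\eps$ (and on $d$ and $q$).

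The only genuinely delicate point is the trade-off in part \emph{(i)}: pushing the exponent from $1/q$ up toward $1$ forces $q\to 1^+$ and hence $p\to 1^+$, so both the embedding constant $c_p$ and the constant absorbing $n^{d-1}$ into $2^{n\eps/2}$ blow up as $\eps\to 0$. This is exactly why the clean rate $2^{-n}$ is out of reach and one must settle for $2^{-n(1-\eps)}$ with an $\eps$-dependent constant; beyond this bookkeeping there is no further analytic obstacle, and the result is indeed a direct corollary of Theorem \ref{thm:qapprox}.
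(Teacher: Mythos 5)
Your proposal is correct and follows essentially the same route as the paper: Jawerth--Franke embedding II to pass from $S^1_{1,\infty}F$ to $S^{1/p}_{p,1}B$ with $p>1$, Theorem \ref{thm:qapprox} for part (ii), and for part (i) the choice of $q$ close to $1$ together with $L_q([0,1]^d)\hookrightarrow L_1([0,1]^d)$, absorbing the logarithmic factor into $2^{n\eps}$. Your extra remarks on the index bookkeeping in (iic) and on the blow-up of constants as $\eps\to 0$ are accurate but not needed beyond what the paper records.
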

\begin{proof} By the embedding $S^1_{1,\infty}F([0,1]^d) \hookrightarrow S^{1/p}_{p,1}B([0,1]^d)$ for $p>1$ we may apply Theorem \ref{thm:qapprox} to obtain the result. For (i) we simply choose $q>1$ small and use the fact that the $L_q$-norm dominates the $L_1$-norm. Clearly, the $\log-$term can be dropped in this regime. 
\end{proof}

\subsection{Sampling numbers}
Let us sharpen our considerations to the problem of optimal sampling recovery. We compare the number $m$ of samples an algorithm is using to the resulting error and call the quantity 

\begin{equation}
    \varrho_m(S^{r}_{p,\theta}A([0,1]^d),L_q([0,1]^d)) := \inf\limits_{\substack{X_m\subset[0,1]^d,|X_m|=m\\ \varphi:\C^m\to L_q([0,1]^d)}}\sup\limits_{\|f|S^{r}_{p,\theta}A([0,1]^d)\| \leq 1}
\| f-\varphi(f(X_m))|L_q([0,1]^d)\|,
\end{equation}
$m$-th sampling width. If, in addition, the mapping $\varphi:\mathbb{C}^n \to L_q$ is linear then we obtain the linear sampling widths
\begin{equation}
    \varrho^{\rm lin}_m(S^{r}_{p,\theta}A([0,1]^d),L_q([0,1]^d)) := \inf\limits_{\substack{X_m\subset[0,1]^d,|X_m|=m\\ \varphi:\C^m\to L_q([0,1]^d)\\
    {\rm linear}}}\sup\limits_{\|f|S^{r}_{p,\theta}A([0,1]^d)\| \leq 1}
\| f-\varphi(f(X_m))|L_q([0,1]^d)\|\,.
\end{equation}

\begin{theorem}\label{thm:psamplingnumber}
Let $1\leq p<\infty$. Then  
$$\varrho_m(S^{1/p}_{p,1}B([0,1]^d),L_p([0,1]^d)) \lesssim m^{-1/p}(\log^{d-1}m)^{\frac{1}{p}+1}$$
for all $m\in\N$.
\end{theorem}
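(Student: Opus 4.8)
The plan is to convert the $L_p$-error bound from Theorem \ref{thm:papprox} into a statement about sampling numbers by carefully counting the number of function evaluations required to compute the truncated Faber--Schauder operator $I_nf$. The operator $I_nf = \sum_{|j|_1 \leq n}\sum_{k \in \D_j} d^2_{j,k}(f)v_{j,k}$ is itself a linear sampling algorithm: each coefficient $d^2_{j,k}(f)$ is a finite linear combination of point values of $f$ (at most three per coordinate by the definition of the second difference operator), and all these nodes are sparse-grid points in $[0,1]^d$. Hence the first step is to estimate the cardinality $m = m(n)$ of the set $X_{m}$ of sampling nodes used by $I_n$.

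\textbf{Key steps.} First I would count the sampling points. For a fixed level $j$ with $|j|_1 \leq n$, the index set $\D_j = \D_{j_1}\times\cdots\times\D_{j_d}$ has cardinality $\asymp 2^{|j|_1}$, and each coefficient $d^2_{j,k}(f)$ uses $O(1)$ new points, so the total number of nodes satisfies
\begin{equation}
    m(n) \lesssim \sum_{|j|_1 \leq n} 2^{|j|_1} \asymp n^{d-1}2^{n},
\end{equation}
by Lemma \ref{lem:comb}(2). The second step is to invert this relation: solving $m \asymp n^{d-1}2^n$ for $n$ gives $2^n \asymp m/n^{d-1}$ and, since $n \asymp \log m$ up to lower-order corrections, one obtains $2^{-n} \lesssim m^{-1}\log^{d-1}m$ and $n^{d-1} \asymp \log^{d-1}m$. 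The third step is to insert these into the error bound of Theorem \ref{thm:papprox}, namely $\|f-I_nf\|_p \lesssim 2^{-n/p}n^{d-1}\|f|S^{1/p}_{p,1}B([0,1]^d)\|$, which yields
\begin{equation}
    \|f-I_nf\|_p \lesssim (m^{-1}\log^{d-1}m)^{1/p}\,(\log^{d-1}m)\,\|f|S^{1/p}_{p,1}B([0,1]^d)\|
    = m^{-1/p}(\log^{d-1}m)^{\frac{1}{p}+1}\|f|S^{1/p}_{p,1}B([0,1]^d)\|.
\end{equation}
Finally, since $I_n$ is an admissible linear sampling operator on exactly $m(n)$ nodes, taking the supremum over the unit ball and the infimum over algorithms gives the claimed upper bound on $\varrho_m$. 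One must also pass from the discrete values of $m(n)$ to arbitrary $m \in \N$, which is handled by a standard monotonicity argument: for general $m$ choose the largest $n$ with $m(n) \leq m$ and use that the sampling numbers are non-increasing in $m$.

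\textbf{Main obstacle.} The essential difficulty is the inversion step, i.e.\ obtaining clean asymptotics from the implicit relation $m \asymp n^{d-1}2^n$. The logarithmic factor $n^{d-1}$ in the node count must be tracked precisely, since it interacts with both the $2^{-n/p}$ and the $n^{d-1}$ appearing in the error bound, and sloppy bookkeeping would produce the wrong power of $\log m$. A careful but routine calculation shows that $n = \log_2 m - (d-1)\log_2 n + O(1)$ and hence $n \asymp \log m$ and $2^{-n} \asymp m^{-1}\log^{d-1}m$; feeding these into the error estimate produces exactly the exponent $\frac{1}{p}+1$ on the logarithm. A minor additional point to verify is that each Faber--Schauder coefficient genuinely requires only a bounded number of samples and that nodes shared across levels are not double-counted in a way that would worsen the bound --- but this follows directly from the tensor-product structure of the difference operators in \eqref{f100} and does not affect the asymptotic order.
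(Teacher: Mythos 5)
Your proposal is correct and follows exactly the paper's argument: the paper likewise observes that $I_n$ uses $m\asymp 2^n n^{d-1}$ nodes (via Lemma \ref{lem:comb}(ii)) and substitutes this into the error bound of Theorem \ref{thm:papprox}. Your write-up merely makes explicit the inversion $n\asymp\log m$, $2^{-n}\asymp m^{-1}\log^{d-1}m$ and the monotonicity step for general $m$, which the paper leaves implicit.
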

\bproof
The upper bound is due to Theorem \ref{thm:papprox} recognizing that the algorithm $I_n$ in \eqref{Tr_F-Sch_sum} samples $f$ in $m \asymp 2^nn^{d-1}$ nodes. This can be trivially checked applying Lemma \ref{lem:comb},(ii). For sake of completeness we refer to \cite[Section 5.1]{By18} where further properties of this operator were studied. 
\eproof

\begin{theorem}
Let $1\leq p <q <\infty$. Then 
\begin{enumerate}
\item $$\varrho_m(S^{1/p}_{p,1}B([0,1]^d),L_q([0,1]^d)) \lesssim m^{-1/q} (\log^{d-1}m)^{2/q} $$
and
\item $$\varrho_m(S^{1}_{1,\infty}F([0,1]^d),L_q([0,1]^d)) \lesssim m^{-1/q} (\log^{d-1}m)^{2/q} $$
\end{enumerate}
for all $m\in\N$.

\end{theorem}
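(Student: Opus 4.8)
The plan is to use the truncated Faber-Schauder operator $I_n$ from \eqref{Tr_F-Sch_sum} as an admissible sampling algorithm and then to convert the error bound in the level $n$ into a bound in the number of samples $m$. First I would recall, exactly as in the proof of Theorem \ref{thm:psamplingnumber}, that $I_n$ evaluates $f$ only at the nodes $x_{j,k}$ with $|j|_1\leq n$ and $k\in \D_j$, whose number is $m_n\asymp 2^n n^{d-1}$ by Lemma \ref{lem:comb},(ii). Thus $I_n$ is a (linear, hence in particular admissible) sampling algorithm using $m_n$ nodes, so that $\varrho_{m_n}(\mathbf{F},L_q)\leq \sup_{\|f\|\leq 1}\|f-I_nf\|_q$ for the relevant class $\mathbf{F}$.

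For part (i) I would let $\mathbf{F}$ be the unit ball of $S^{1/p}_{p,1}B([0,1]^d)$ and invoke Theorem \ref{thm:qapprox}, which gives
$$\sup_{\|f\|\leq 1}\|f-I_nf\|_q\lesssim 2^{-n/q}n^{(d-1)/q}.$$
For part (ii) I would take $\mathbf{F}$ to be the unit ball of $S^1_{1,\infty}F([0,1]^d)$ and use Theorem \ref{thm:p=1},(ii) instead, which produces the very same right-hand side (here the embedding $S^1_{1,\infty}F([0,1]^d)\hookrightarrow S^{1/p}_{p,1}B([0,1]^d)$ for $p>1$ from Lemma \ref{emb},(iic) is what makes the preceding estimate applicable). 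It then remains only to re-express $2^{-n/q}n^{(d-1)/q}$ as a function of $m$.

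The conversion is the standard discretization step. Given an arbitrary $m\in\N$, I would choose $n=n(m)$ maximal with $m_n\leq m$. Since $\varrho_m$ is non-increasing in the number of nodes, $\varrho_m(\mathbf{F},L_q)\leq \varrho_{m_n}(\mathbf{F},L_q)$, while the doubling $m_{n+1}\asymp 2m_n$ forces $m\asymp m_n\asymp 2^n n^{d-1}$ and hence $n\asymp\log m$. From $2^{-n}\asymp n^{d-1}/m$ I then obtain
$$2^{-n/q}n^{(d-1)/q}\asymp\Big(\frac{n^{d-1}}{m}\Big)^{1/q}n^{(d-1)/q}=m^{-1/q}n^{2(d-1)/q}\asymp m^{-1/q}(\log^{d-1}m)^{2/q},$$
which is exactly the asserted bound in both cases.

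I do not expect a serious obstacle here: the statement is entirely driven by the $L_q$-error estimates already established in Theorems \ref{thm:qapprox} and \ref{thm:p=1} together with the node count of $I_n$. The only step needing a little care is the passage from the discrete sample sizes $m_n$ to an arbitrary $m$, which is handled by the monotonicity of $\varrho_m$ and the doubling $m_{n+1}\asymp 2m_n$; this guarantees that choosing the best available level $n$ costs only a constant factor and leaves the logarithmic exponent $2(d-1)/q$ unchanged.
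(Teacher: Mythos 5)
Your proposal is correct and follows essentially the same route as the paper: both apply Theorem \ref{thm:qapprox} for (i) and Theorem \ref{thm:p=1},(ii) for (ii), and convert the level $n$ into the sample count via $m\asymp 2^n n^{d-1}$ from Lemma \ref{lem:comb},(ii), yielding $2^{-n/q}n^{(d-1)/q}\asymp m^{-1/q}(\log^{d-1}m)^{2/q}$. Your write-up merely makes explicit the monotonicity/doubling step that the paper leaves implicit.
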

\bproof
The upper bound in (i) is due to Theorem \ref{thm:qapprox} taking the number of sampling nodes into account, see Theorem \ref{thm:psamplingnumber}. The upper bound in (ii) is due to Theorem \ref{thm:p=1}.
\eproof

\subsection{Lower bounds}
The linear width of a class  $\mathbf{F}$ in a normed space $X$ has been introduced by Tikhomirov \cite{zbMATH03158232} more than $60$ years ago. It is defined by
$$
\lambda_m(\mathbf{F}, X) 
:= \inf\limits_{\substack{A:\,X\rightarrow X \ \ \text{linear} \\ \text{rank}\, A \leq m} }
\sup\limits_{f \in \mathbf{F}}
\big\|f-A(f) \big\|_{X} \, . 
$$
A.S. Romanyuk \cite{zbMATH00028594,zbMATH05268040} proved for $\mathbf{F}$, the unit ball in $S^{1/p}_{p,1}B([0,1]^d)$, that in case $1\leq p\leq q\leq 2$ and $q>1$
\begin{equation}
  \begin{split}
        \lambda_m(S^{1/p}_{p,1}B([0,1]^d), L_q([0,1]^d)) &\geq d_m(S^{1/p}_{p,1}B([0,1]^d), L_q([0,1]^d))\\
        &\gtrsim (m^{-1} \log^{d-1}m)^{1/q}\,.
\end{split}
\end{equation}
From these bounds we obtain the following lower bounds for the (linear) sampling numbers.
\begin{theorem} Let $1\leq p\leq q<\infty$. Then we have\\
{\em (i)}
$$
    \varrho_n(S^{1/p}_{p,1}B([0,1]^d),L_q([0,1]^d)) \gtrsim n^{-1/q}\,.
$$
{\em (ii)} If additionally $1\leq p\leq q\leq 2$ then 
$$  
 \varrho^{\rm lin}_n(S^{1/p}_{p,1}B([0,1]^d),L_q([0,1]^d)) \gtrsim (n^{-1}\log^{d-1} n)^{1/q}\,.
$$
\end{theorem}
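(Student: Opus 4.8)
The plan is to prove the two parts by entirely different mechanisms. Part (i) is a direct \emph{fooling} argument: I exhibit, for every choice of $n$ sampling nodes, a single normalized Faber hat function that is invisible to the sampler but has $L_q$-norm of order $n^{-1/q}$. Part (ii) rests on the elementary comparison $\lambda_m\le\varrho^{\rm lin}_m$ together with the lower bound of Romanyuk \cite{zbMATH00028594,zbMATH05268040} recalled above.

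For (i), fix an arbitrary admissible pair $(X_n,\varphi)$ with $|X_n|=n$ and $\varphi:\C^n\to L_q([0,1]^d)$. Choose the isotropic level $j=(\ell,\dots,\ell)$ with $\ell$ minimal such that $2^{d\ell}>n$; then $n<2^{|j|_1}<2^dn$, so $2^{|j|_1}\asymp n$, and the $2^{d\ell}$ dyadic subcubes of side length $2^{-\ell}$ partition $[0,1]^d$. By the pigeonhole principle at least one of these subcubes, say $Q$, contains no point of $X_n$. The tensor hat function $v_{j,k_0}$ associated to $Q$ is supported in $\overline{Q}$ and vanishes on $\partial Q$, hence $v_{j,k_0}(x)=0$ for every $x\in X_n$. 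Using the univariate endpoint computation carried out in the proof of Lemma \ref{non-comp} (which gives $\|v_{\ell,0}|B^{1/p}_{p,1}([0,1])\|\asymp 1$) together with the tensor structure of dominating mixed smoothness spaces, one obtains $\|v_{j,k_0}|S^{1/p}_{p,1}B([0,1]^d)\|\asymp 1$, while $\|v_{j,k_0}\|_q^q=\prod_{i=1}^d\|v_{\ell,k_{0,i}}\|_q^q\asymp 2^{-|j|_1}$, so that $\|v_{j,k_0}\|_q\asymp 2^{-|j|_1/q}\asymp n^{-1/q}$.

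Writing $g:=v_{j,k_0}/\|v_{j,k_0}|S^{1/p}_{p,1}B([0,1]^d)\|$ we then have $\|g|S^{1/p}_{p,1}B([0,1]^d)\|=1$, $g(X_n)=\mathbf 0$, and $\|g\|_q\asymp n^{-1/q}$. Both $g$ and $-g$ lie in the unit ball and produce the identical sampling data $\mathbf 0$, so the reconstruction $h_0:=\varphi(\mathbf 0)$ is common to both. The triangle inequality gives $\|g-h_0\|_q+\|{-g}-h_0\|_q\ge 2\|g\|_q$, whence the worst-case error over $\{g,-g\}$ is at least $\|g\|_q\asymp n^{-1/q}$. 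Since $(X_n,\varphi)$ was arbitrary, taking the infimum yields $\varrho_n(S^{1/p}_{p,1}B([0,1]^d),L_q([0,1]^d))\gtrsim n^{-1/q}$, which is (i). For (ii) I would use that every \emph{linear} sampling method $f\mapsto\varphi(f(X_m))$ is in particular a rank-$\le m$ linear approximation of elements of the unit ball $\mathbf F$ in $L_q$, since it factors through $\C^m$ via the (on $\mathbf F$ well-defined) point evaluations; as $\lambda_m$ is the infimum of the $L_q$-error over all such rank-$m$ linear approximations, we get $\lambda_m(\mathbf F,L_q)\le\varrho^{\rm lin}_m(\mathbf F,L_q)$. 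Combining this with Romanyuk's bound, valid for $1\le p\le q\le 2$, gives $\varrho^{\rm lin}_m\ge\lambda_m\gtrsim(m^{-1}\log^{d-1}m)^{1/q}$.

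The main obstacle is the endpoint bookkeeping in (i): one must verify $\|v_{j,k_0}|S^{1/p}_{p,1}B\|\asymp 1$ precisely at the limiting smoothness $r=1/p$ with fine index $q=1$, which is exactly the (convergent) computation of Lemma \ref{non-comp}, and then lift it to $d$ variables — here only the upper bound $\|v_{j,k_0}|S^{1/p}_{p,1}B\|\lesssim 1$ is needed, so the multiplicativity of mixed-smoothness norms under tensor products suffices. A minor caveat is the borderline case $q=1$ in (ii), where Romanyuk's estimate formally requires $q>1$; this forces $p=1$ and must be excluded or treated separately.
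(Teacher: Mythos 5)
Your proposal is correct, and it actually supplies more than the paper does: the paper derives this theorem with a single sentence (``From these bounds we obtain\dots'') from Romanyuk's estimate $\lambda_m\ge d_m\gtrsim (m^{-1}\log^{d-1}m)^{1/q}$, which genuinely covers only part (ii) via $d_m\le\varrho^{\rm lin}_m$ (the range of a linear sampling operator is an at most $m$-dimensional subspace). Kolmogorov or linear widths cannot bound the \emph{nonlinear} sampling numbers from below, so part (i) requires exactly the fooling argument you give: pigeonholing an empty dyadic cube, taking the associated tensor hat function $v_{j,k_0}$ (which vanishes on $\partial Q$ and hence at all nodes, including boundary ones), normalizing by $\|v_{j,k_0}|S^{1/p}_{p,1}B\|\asymp 1$ from the endpoint computation in Lemma \ref{non-comp} plus tensorization, and playing $\pm g$ against each other. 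This is the standard and, as far as I can see, the intended argument; your write-up fills a real gap in the exposition. Two minor points: for (ii), since point evaluations are not bounded on $L_q$, it is cleaner to invoke $d_m\le\varrho^{\rm lin}_m$ directly rather than $\lambda_m\le\varrho^{\rm lin}_m$ with the paper's definition of $\lambda_m$ as an infimum over $A:X\to X$ (Romanyuk's lower bound is stated for $d_m$ anyway); and you are right that the case $q=1$ (hence $p=1$) is not covered by the quoted width estimate, though there part (i) already gives the claimed rate $n^{-1}$ up to the missing logarithm, so only the log factor in (ii) is at issue — a blemish present in the paper itself.
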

\section{Outlook and discussion}
In this paper we have shown that for the sampling recovery problem the compact embedding into $L_\infty$ is not necessary. There are several relevant multivariate function spaces which fall under this scope, like limiting Besov and Triebel-Lizorkin spaces with smoothness $r=1/p$ and further parameter conditions to ensure the embedding into the space of continuous functions. We were able to give upper bounds for the so-called sampling numbers in $L_q$ with $q<\infty$ which are sharp in the polynomial main rate. What concerns the right order of the logarithm the situation is completely open. The results presented here are, as far as the authors know, the first results in this direction apart from a univariate result of Oswald \cite{Os81}.

Let us comment on the particular case of $L_1$-smoothness spaces. Smoothness spaces built upon $L_1$ with smoothness $r=1$ play an important role for numerical integration. This includes for instance the space $S^1_1W([0,1]^d)$ defined via weak derivatives. This space can not be described via Fourier analytical means and is therefore difficult to handle. See for instance $\cite{LueUl22}$\,. However, these spaces fall into the scope of this paper since we also have $r=1/p$ and a non-compact embedding into $L_\infty$. In \cite{LueUl22} a Faber-Schauder characterization is shown, similar as above, namely 
\begin{equation}\label{S11W}
    \sup\limits_{j\in \N_{-1}^d} \sum\limits_{k\in \zz^d} |d^2_{j,k}(f)| \lesssim \|f|S^1_1W(\R)\|\,.
\end{equation}
In particular, this would imply the following so-called sampling inequality
$$
    \sum\limits_{k\in \zz^d} |f(k)| \lesssim \|f|S^1_1W(\R)\|\,.
$$
This extends the result in \cite[Prop.\ 2]{SchSi2000} in several directions. On the one hand we consider the multivariate case and on the other hand the space $S^1_1W(\R)$ is larger than $S^1_{1,1}B(\R)$\,.

Having \eqref{S11W} at hand the following slightly sharper version of Theorem \ref{thm:p=1} is  immediate.
\begin{theorem}\label{thm:S11W} {\em (i)} It holds
$$
        \|f-I_nf\|_1 \lesssim 2^{-n}n^{d-1}\|f|S^1_1W([0,1]^d)\|\,.
$$
{\em (ii)} For any $1<q<\infty$ we have 
$$
        \|f-I_nf\|_q \lesssim  2^{-n/q}n^{(d-1)/q}\|f|S^1_1W([0,1]^d)\|\,.
$$
\end{theorem}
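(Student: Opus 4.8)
The plan is to run the arguments of Theorems~\ref{thm:papprox} and \ref{thm:qapprox} essentially verbatim, the only change being that the Besov characterization supplied by Proposition~\ref{discvscont} (respectively Corollary~\ref{cor:faber}) is replaced by the Faber--Schauder characterization \eqref{S11W}. The decisive observation is that \eqref{S11W} is exactly the statement $\|d^2_{j,k}(f)|s^1_{1,\infty}b\|\lesssim\|f|S^1_1W(\R)\|$: for $r=p=1$ and $q=\infty$ the sequence quasi-norm of Definition~\ref{defsequ} has weight $2^{|j|_1(r-1/p)q}=1$ and collapses to $\sup_{j\in\N_{-1}^d}\sum_{k\in\Z}|d^2_{j,k}(f)|$. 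As in Corollary~\ref{cor:faber}, the transition from $\R$ to the cube is handled by extension: given $f\in S^1_1W([0,1]^d)$ one selects $g\in S^1_1W(\R)$ with $g|_{[0,1]^d}=f$, notes $d^2_{j,k}(g)=d^2_{j,k}(f)$ for $k\in\D_j$, applies \eqref{S11W} to $g$, and passes to the infimum over extensions.

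For part (i) I would expand the error as $f-I_nf=\sum_{|j|_1>n}\sum_{k\in\D_j}d^2_{j,k}(f)v_{j,k}$, apply the triangle inequality over the levels $j$, and insert the factor $2^{|j|_1}2^{-|j|_1}$. Using the norm equivalence $\|\sum_{k}d^2_{j,k}(f)v_{j,k}\|_1\asymp 2^{-|j|_1}\sum_{k}|d^2_{j,k}(f)|$ from Section~3, the product $2^{|j|_1}\|\sum_{k}d^2_{j,k}(f)v_{j,k}\|_1$ becomes $\asymp\sum_{k}|d^2_{j,k}(f)|$, whose supremum over $j$ is $\lesssim\|f|S^1_1W([0,1]^d)\|$ by \eqref{S11W}. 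Pulling this supremum out of the sum leaves the geometric tail $\sum_{|j|_1>n}2^{-|j|_1}\asymp n^{d-1}2^{-n}$ from Lemma~\ref{lem:comb}(i) with $\alpha=1$, which produces the claimed factor $2^{-n}n^{d-1}$.

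For part (ii) I would mimic Theorem~\ref{thm:qapprox} with $p=1$. The Jawerth--Franke~I embedding (Lemma~\ref{emb},(iib)) with source middle index $u=q>1=p$ and target smoothness $t=0$ yields, exactly as in the opening display of the proof of Theorem~\ref{thm:qapprox}, the bound $\|f-I_nf\|_q\lesssim\|f-I_nf|S^{1-1/q}_{1,q}B([0,1]^d)\|$. The inverse (synthesis) Faber--Schauder estimate then bounds this by $\big(\sum_{|j|_1>n}2^{q|j|_1(1-1/q)}\|\sum_{k\in\D_j}d^2_{j,k}(f)\chi_{j,k}\|_1^q\big)^{1/q}$. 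Rewriting $2^{q|j|_1(1-1/q)}\|\cdots\|_1^q=\big(2^{|j|_1}\|\cdots\|_1\big)^q2^{-|j|_1}$ and factoring out $\sup_j 2^{|j|_1}\|\sum_{k}d^2_{j,k}(f)\chi_{j,k}\|_1\lesssim\|f|S^1_1W([0,1]^d)\|$ (again by \eqref{S11W} together with the same norm equivalence) leaves $\big(\sum_{|j|_1>n}2^{-|j|_1}\big)^{1/q}\asymp(n^{d-1}2^{-n})^{1/q}$, which is the asserted $2^{-n/q}n^{(d-1)/q}$.

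The only step requiring genuine care is the applicability of the inverse Faber--Schauder characterization to $S^{1-1/q}_{1,q}B$ in the critical case $p=1$. Here one must emphasize that only the reconstruction direction $\|f|S^s_{p,q}B\|\lesssim\|d^2_{j,k}(f)|s^s_{p,q}b\|$ is invoked, and this direction is governed by the smoothness lying in the range $0<s<1+1/p$ rather than by the analysis-side requirement $s>1/p$ (which would fail). With $s=1-1/q$ and $p=1$ the condition $0<1-1/q<2$ holds for every $q>1$, so \cite[Theorem~4.18]{By18}, extended to the cube exactly as indicated in the proof of Theorem~\ref{thm:qapprox}, applies. Beyond this bookkeeping there is no real obstacle, since \eqref{S11W} already furnishes the coefficient bound that in Theorems~\ref{thm:papprox} and \ref{thm:qapprox} had to be won through the harmonic-analysis machinery of Section~3; the present proof merely substitutes it and concludes.
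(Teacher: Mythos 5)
Your proposal is correct and follows exactly the route the paper intends: the paper declares Theorem~\ref{thm:S11W} ``immediate'' from \eqref{S11W}, meaning one reruns the proofs of Theorems~\ref{thm:papprox} and \ref{thm:qapprox} with $p=1$, substituting the coefficient bound $\sup_j\sum_k|d^2_{j,k}(f)|\lesssim\|f|S^1_1W\|$ for Proposition~\ref{discvscont}/Corollary~\ref{cor:faber}, and you have filled in precisely those details (including the correct observation that only the synthesis direction of the Faber--Schauder characterization of $S^{1-1/q}_{1,q}B$ is needed, for which the smoothness $1-1/q$ is admissible).
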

\noindent A proper Fourier analytical replacement of the spaces $S^1_1W([0,1]^d)$ are the spaces $S^1_{1,q}F([0,1]^d)$. However these spaces are not really comparable, especially when $q=\infty$. Using the methods in \cite[Thm.\ 1.9]{GaSeUl22} there is strong evidence for proving a version of \eqref{S11W} also for the spaces $S^1_{1,\infty}F(\R)$. This would imply a version of Theorem \ref{S11W}. By well-known arguments a cubature formula with performance
$$
    \Big|f-\sum\limits_{i=1}^N \lambda_i f(x^i)\Big| \lesssim N^{-1}\log N^{2(d-1)}\|f|S^1_{1,\infty}F([0,1]^d)\|
$$
can be constructed by integrating the approximand. Note, that there were efforts in the literature to treat such limiting cases, see for instance \cite[Cor.\ 6.5]{UlUl16}. Suboptimal bounds were proven there. Note, that results for such limiting cases are related to the Kokhsma-Hlawka inequality, saying that QMC-cubature in $S^1_1W$ is related to the star discrepancy of the cubature nodes.

\nocite{TDiss07,NaSchUl21,LiTe22,Te21,TeUl22,TeUl21,PoUl22}
\vspace{1cm}
{\bf Acknowledgments}
  Special thanks go to Winfried Sickel for stimulating discussions. 
  The second named author is supported
  by the German Academic Exchange Service (DAAD, Grant 57588362)
  and
  by the Philipp Schwartz initiative of the Alexander von Humboldt Foundation.

\end{document}